\documentclass[a4paper,reqno,final]{amsart}

\usepackage{my_style}

\usepackage{soul}
\usepackage[normalem]{ulem}

\title[A fully iterative adaptive energy-based approach for elliptic PDE]{A fully iterative adaptive energy-based approach
for monotone elliptic problems}

\author{Raphael Leu \and Thomas P.~Wihler}
\address{Mathematics Institute, University of Bern, CH-3012 Switzerland}

\thanks{%
      The authors acknowledge the support of the Swiss National Science Foundation,
      Grant No. 200021\_212868.}

\subjclass[2020]{35A15, 35B38, 65J15, 65M50, 65N30.}

\keywords{Adaptive finite element methods, energy-based algorithms, iterative energy reduction schemes, energy minimization, iterative linearized Galerkin methods, conjugated gradient method, variational adaptivity, adaptive mesh refinements, monotone elliptic boundary value problems.}

\begin{document}

\begin{abstract}
    We present a fully iterative adaptive
    algorithm for the numerical minimization
    of strongly convex energy functionals
    in Hilbert spaces.
    The proposed approach, which we first present in abstract form, generates a
    hierarchical sequence of adaptively refined
    finite-dimensional approximation spaces
    and employs a (nonlinear) conjugate gradient (CG) method
    to compute suitable approximations on each space.
    A core novelty of our approach is that all components
    of the algorithm are consistently driven by
    \emph{energy reduction principles} rather than by classical a posteriori estimators.
    In particular, adaptive refinement is steered by local
    energy reduction indicators which aim to construct
    subsequent approximation spaces in a way that attains the largest potential decrease in energy.
    Likewise, the stopping criteria for the iterative solver
    are based on either relative or averaged energy reductions
    on each subspace.
    As a concrete realization, we present a concise implementation for $\spc P_1$
    finite element discretizations of second-order semilinear elliptic diffusion-reaction models, where the local indicators driving the element refinements are computed based on edge-wise energy reductions.
    Numerical experiments demonstrate that the resulting scheme
    achieves optimal convergence for various benchmark
    problems in two-dimensional polygonal domains.
\end{abstract}
\maketitle

\section{Introduction}
\label{sec:introduction}

We study the numerical minimization of energy functionals
$\E : \spc V \to \mathbb{R}$ defined on a Hilbert space $\spc V$
with inner product
$(\cdot, \cdot)_{\spc V} : \spc V \times \spc V \to \mathbb{R}$
and induced norm
$\| v \|_{\spc V} := \sqrt{(v, v)_{\spc V}}$.
Throughout, we assume the following:

\begin{assumption}
    \label{assumption}
    The energy functional $\E : \spc V \to \mathbb{R}$ is
    \begin{enumerate}[(i)]
        \item%
        \emph{Gâteaux differentiable}, with the derivative denoted by $\E':\,\spc V\to\spc V'$, $u\mapsto \E'[u]$, where $\spc V'$ signifies the dual space of $\spc V$;

        \item%
        \emph{weakly coercive}, i.e. $\E(v) \to +\infty$ as $\|v\|_{\spc V} \to \infty$;

        \item%
        \emph{strictly convex}, i.e. $\E(tu+(1-t)v)<t\,\E(u)+(1-t)\E(v)$ for any $t\in(0,1)$ and for all $u,v\in\spc V$ with $u\neq v$.
    \end{enumerate}
\end{assumption}
Under these assumptions, standard arguments of variational calculus yield the existence of a unique minimizer
$u \in \spc V$ of $\E$, i.e.
\begin{equation}\label{eq:Emin}
\E(u)<\E(v)\qquad\forall v\in\spc V,\, v\neq u,
\end{equation}
which, for G\^{a}teaux differentiable functionals, is equivalently characterized by the weak formulation
\begin{equation}
    \label{eq:full-space-problem}
    \E'[u](v) = 0
    \quad
    \forall v \in \spc V,
\end{equation}
see~\cite[Cor.~25.18]{Zeidler:IIB}. We refer to the solution of~\eqref{eq:full-space-problem}
as the \emph{full space solution}.
Clearly, all properties of Assumption~\ref{assumption}
carry over to any finite-dimensional (and thus closed) subspace
$\spc W \subset \spc V$;
in particular, the unique minimizer
$u_{\spc W} \in \spc W$
of $\E$ over $\spc W$ exists and satisfies
\begin{equation}
    \label{eq:finite-dimensional-space-problem}
    \E'[u_{\spc W}](w) = 0
    \quad
    \forall\, w \in \spc W.
\end{equation}
We call the solution of~\eqref{eq:finite-dimensional-space-problem}
the \emph{Galerkin solution in $\spc W$}.

Energy minimization problems satisfying
Assumption~\ref{assumption} naturally arise in the context of
linear and nonlinear monotone elliptic partial differential equations (PDEs) of variational type; more specifically, for the purpose of the numerical experiments in the present work (see Section~\ref{sec:numerical-experiments}), we shall focus on semilinear diffusion-reaction boundary value problems in divergence form, with monotone reaction terms, cf.~\eqref{eq:non-linear-pde}.

\subsection*{Adaptive FEM and ILG approach}
The application of adaptive finite element methods (AFEM) for the
numerical approximation of weak formulations~\eqref{eq:full-space-problem} in the context of
elliptic (and parabolic) PDE is well established, whereby they typically follow the classical
\texttt{solve}--\texttt{estimate}--\texttt{mark}--\texttt{refine} paradigm:
on a given finite element mesh, 
the Galerkin solution is computed (or an inexact approximation thereof),
and a posteriori (residual or error) estimators are employed
to flag elements for local refinement. 

In the so-called iterative linearized Galerkin (ILG) methodology,
see, e.g.~\cite{hw20-nonlinear}, the weak formulation~\eqref{eq:full-space-problem}
is solved numerically based on an instantaneous interplay of iterative solvers
(which deal with possible nonlinearities
or with linear solver errors resulting from the approximation of the
algebraic systems)
with local mesh enrichments. In combination with AFEM, this approach starts from an initial finite-dimensional subspace $\spc V_{0} \subset \spc V$, which is adaptively refined
to construct a nested sequence of subspaces
$\spc V_{0}
    \subset
    \spc V_{1}
    \subset \dots \subset
    \spc V_{N} \subset \dots \subset \spc V$.
On each space~$\spc V_{N}$, \mbox{$N \ge 0$,}
the corresponding Galerkin solution (i.e. the solution of~\eqref{eq:finite-dimensional-space-problem} on $\spc W=\spc V_N$), denoted by
$u^h_{N} \in \spc V_{N}$, is approximated by a sequence of approximations $\{u^n_{N}\}_n \subset \spc V_{N}$ that results from the application of iterative (nonlinear or linear) solvers.
In practice, the iteration is stopped after finitely many steps, yielding a final iterate $u^\star_{N}\in\spc V_N$ for each subspace.
From a theoretical point of view, this process is typically controlled by decomposing
the error into two contributions (in the simplest case), viz.
\begin{equation}
    u - u_{N}^\star
    =
    (u - u_{N}^h)
    +
    (u_{N}^h - u_{N}^\star),
\end{equation}
where the two brackets on the right-hand side contain the \emph{discretization error} and the \emph{iteration error}, respectively, with the full-space solution
$u \in \spc V$ of~\eqref{eq:full-space-problem}.
Correspondingly, in the ILG approach, the overall error is optimally bounded
by balancing both the discretization and iteration errors,
whereby the latter
depends on both the (linear or nonlinear) solvers employed 
as well as on the stopping criterion
used to select the final iterate $u^\star_{N} \in\spc V_N$.
Within an iterative adaptive context, controlling the total error therefore requires
careful monitoring of all three of these key components,
which is typically accomplished by deriving suitable a posteriori
estimates that are able to account individually for each of these error sources.
We refer to the works~\cite{hw20-nonlinear,cw17, hpw21} on the ILG approach,
or the related
works~\cite{ern2013adaptive,haberl2021convergence,JSV:10,bps25,refId0}
for further details.

\subsection*{Variational adaptivity}

Notably, the AFEM approach usually does \emph{not} explicitly exploit
a possible energy structure of the  PDE problem under consideration,
which is in contrast to the recent \emph{variational adaptivity} methodology,
see~\cite{HSW21,ahw23,hw25}, to be further developed in the current article; we also mention the paper~\cite{HMRV:24} for energy-related a posteriori bounds. Indeed, in this new approach, all algorithmic components are motivated
and guided by \emph{energy minimization principles}. Here, we may bear in mind that both the full-space solution $u \in \spc V$ of~\eqref{eq:full-space-problem} and the Galerkin solution $u^h_{N} \in \spc V_{N}$ from~\eqref{eq:finite-dimensional-space-problem} are equivalently characterized as the unique energy
minimizers over their respective spaces, cf.~\eqref{eq:Emin}. We will demonstrate in the present paper that this observation allows to address 
the above-mentioned error components using techniques inspired by (variational) principles of energy reduction, which, crucially, do not rely on classical a posteriori error estimates.
Thereby, this novel avenue proves to be a widely applicable framework for
the efficient practical approximation of linear and nonlinear PDE,
where such bounds are unavailable or difficult to obtain.

In the context of finite element discretizations,
variational adaptivity replaces the local (residual-based) error estimation step by a local energy reduction prediction.
These indicators quantify potential energy reductions
induced by using additional degrees of freedom that locally enrich the current approximation space.
This idea applies in very general terms to essentially any variational problem that
involves operators with a local structure, and to a broad class of (conforming) numerical methods:
For instance, for $hp$-type finite element methods for (self-adjoint) linear and nonlinear elliptic PDE,
the local enrichment corresponding to an increase in the
local polynomial degree or a local mesh refinement may, in both cases,
be realized in a highly effective way by adding suitable local degrees of freedom;
see~\cite{bsw25} and \cite{hhs+25} for linear elliptic PDE or for linear and
nonlinear eigenvalue problems, respectively.

\subsection*{Contribution}

Compared to the available papers on variational adaptivity~\cite{HSW21,ahw23,hw25}, where the Galerkin solution is obtained by repeatedly linearizing the nonlinear
problem and by dealing with the resulting large-scale linear systems with the aid of direct solvers, the present work advances this line of research by proposing a \emph{purely iterative energy-based method} in which
the arising discrete problems are no longer solved exactly.
Instead, we compute inexact solutions employing a (nonlinear)
conjugate gradient (CG) method in conjunction with suitable
energy-based stopping criteria.

To illustrate our approach, we present a novel realization of
variational adaptivity for $\mathbb{P}_1$
finite element discretizations in two space dimensions.
In this context, the works~\cite{HSW21,ahw23,hw25}
have used local red refinement of individual (triangular) elements in 2d to compute energy reduction indicators.
In contrast, we propose an even more local \emph{edge-based variational adaptivity strategy,}
where potential energy reductions
are associated with element interface edges, and realized
through edge bisection. This yields a more efficient and elegant algorithm:
Firstly, in contrast to element-based refinement,
each edge (or face) is refined only once during the marking step;
secondly, the resulting local linearized problems
admit simple closed-form solutions. Finally, we numerically demonstrate that variational adaptivity
retains optimal convergence rates for the sequence of final iterates,
even when no large-scale linear system is solved exactly.

\subsection*{Outline}
The remainder of this paper is organized as follows:
In Section~\ref{sec:iterative-solver-and-stopping-criteria},
we begin by considering the case where the energy $\E$ in~\eqref{eq:Emin}
is linear-quadratic;
this permits to motivate both the CG method as a natural
iterative solver for convex variational problems
and the introduction of two different energy-oriented stopping criteria for the CG scheme.
Furthermore, we highlight some key properties relevant for its interpretation as an
energy minimization method.
Section~\ref{sec:space-enrichments-and-va} introduces the concept
of variational adaptivity, which is first presented as an abstract,
energy-driven space enrichment strategy used to generate a sequence
of hierarchical subspaces based on (local) energy reduction indicators.
Subsequently, a concrete realization of this framework in the context of
finite element discretizations is developed;
in particular, we propose an edge-based variational adaptivity
strategy that may be viewed as a straightforward yet highly effective
realization of the abstract approach.
Finally, Section~\ref{sec:numerical-experiments} presents a
series of numerical experiments assessing the performance of
the overall method.
We report on convergence behavior and demonstrate the effectiveness
of variational adaptivity within a fully iterative, inexact-solver setting.

\section{Iterative Solver And Stopping Criteria}
\label{sec:iterative-solver-and-stopping-criteria}
In this Section, we introduce the iterative solver
and the associated stopping criteria for the numerical solution
of~\eqref{eq:finite-dimensional-space-problem} on a fixed
finite-dimensional subspace.
To this end, we first consider the problem in a simplified setting.
We then conclude that the strategy presented below
naturally extends to more general energy minimization problems.

\subsection{A Simplified Model Problem}
\label{sec:simplified-model-problem}
For the sake of this discussion, we temporarily assume that
$\E : \spc V \to \mathbb{R}$ is \emph{linear-quadratic},
i.e. of the form
\begin{equation}
    \label{eq:energy-linear-quadratic}
    \E(v) = \frac{1}{2} a(v,v) - \ell(v),
\end{equation}
where \(a : \spc V \times \spc V \to \mathbb{R}\) is a bounded, coercive, and symmetric
bilinear form, inducing an inner product
$(v,w)_a := a(v,w)$ on $\spc V$ with corresponding \emph{energy norm}
$\|v\|_a := \sqrt{(v,v)_a}$,
and \(\ell : \spc V \to \mathbb{R}\) denotes a bounded linear functional.
In this setting, $\E$ satisfies Assumption~\ref{assumption}.
Therefore, there exists a unique minimizer
$u \in \spc V$ of $\E$
which is equivalently characterized by the weak formulation
\begin{equation}
    \label{eq:weak-form-linear-quadratic}
    \E'[u](v)
    = a(u,v) - \ell(v)
    = 0
    \qquad \forall v \in \spc V.
\end{equation}
Again, all properties of $\E$ carry over to any
(finite-dimensional) linear subspace $\spc W \subset \spc V$
and hence, $\E$ admits a unique minimizer $u_{\spc W} \in \spc W$
over the space $\spc W$
that is equivalently characterized by the weak form
\begin{equation}
    \label{eq:finite-weak-form-linear-quadratic}
    \E'[u_{\spc W}](w)
    = a(u_{\spc W}, w) - \ell(w)
    = 0
    \qquad \forall w \in \spc W.
\end{equation}

The following lemma will be instrumental in the sequel.
\begin{lemma}
    \label{lemma:galerkin-orthogonality-general-subspace}
    Let $\spc W \subset \spc V$ be any linear subspace, and
    $u_{\spc W} \in \spc W$
    the unique minimizer of $\E$ over $\spc W$.
    Then, for all $w \in \spc W$, the identity
    \begin{equation}
        \label{eq:galerkin-orthogonality-on-general-subspace}
        \|u_{\spc W} - w\|_a^2
        =
        \|u_{\spc W}\|_a^2 + 2\E(w)
    \end{equation}
    holds true.
    Further, setting $w = u_{\spc W}$
    in~\eqref{eq:galerkin-orthogonality-on-general-subspace}
    immediately yields
    \begin{equation}
        \label{eq:negative-energy-linear-quadratic-energy}
        \E (u_{\spc W})
        =
        -\frac{1}{2}
        \|u_{\spc W}\|_a^2
        \leq 0,
    \end{equation}
    i.e. the energy of $u_{\spc W}$ is necessarily non-positive.
    
\end{lemma}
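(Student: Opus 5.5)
The identity will follow by expanding the left-hand side of \eqref{eq:galerkin-orthogonality-on-general-subspace} with the bilinearity and symmetry of $a(\cdot,\cdot)$, and then removing the mixed term with the discrete weak formulation \eqref{eq:finite-weak-form-linear-quadratic}. No argument beyond this algebra is needed.

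Fix $w\in\spc W$. By bilinearity and symmetry of $a$,
\[
\|u_{\spc W}-w\|_a^2 = a(u_{\spc W},u_{\spc W}) - 2a(u_{\spc W},w) + a(w,w).
\]
Since $w\in\spc W$, it is an admissible test function in \eqref{eq:finite-weak-form-linear-quadratic}, so $a(u_{\spc W},w)=\ell(w)$. Substituting this gives
\[
\|u_{\spc W}-w\|_a^2 = \|u_{\spc W}\|_a^2 + a(w,w) - 2\ell(w) = \|u_{\spc W}\|_a^2 + 2\E(w),
\]
where the last equality is the definition \eqref{eq:energy-linear-quadratic} of $\E$. This is \eqref{eq:galerkin-orthogonality-on-general-subspace}.

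For the second claim, take $w=u_{\spc W}\in\spc W$. The left-hand side vanishes, so $0=\|u_{\spc W}\|_a^2+2\E(u_{\spc W})$. This is \eqref{eq:negative-energy-linear-quadratic-energy}, and the sign $\E(u_{\spc W})\le 0$ follows because $\|\cdot\|_a^2\ge 0$.

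The only point that needs care is the choice of the sign convention $+2\E(w)$, which must be consistent with the definition \eqref{eq:energy-linear-quadratic} of $\E$. The lemma allows $\spc W$ to be an arbitrary linear subspace, not necessarily closed, but the existence of $u_{\spc W}$ is part of the hypothesis. Its characterization \eqref{eq:finite-weak-form-linear-quadratic} still holds in that generality: it is the first-order optimality condition obtained by differentiating $t\mapsto\E(u_{\spc W}+tw)$ at $t=0$, which does not require $\spc W$ to be closed.
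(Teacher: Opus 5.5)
Your proof is correct and matches the paper's argument exactly: you expand $\|u_{\spc W}-w\|_a^2$ using the symmetry of $a(\cdot,\cdot)$, replace $a(u_{\spc W},w)$ by $\ell(w)$ via \eqref{eq:finite-weak-form-linear-quadratic}, recognize $2\E(w)$, and then set $w=u_{\spc W}$. Your extra remark is valid but not needed: the weak form follows from the first-order condition for $t\mapsto\E(u_{\spc W}+tw)$ even when $\spc W$ is not closed.
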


\begin{proof}
    Applying the symmetry of $a(\cdot,\cdot)$, for any $w\in\spc W$, we infer that
    \begin{align}
        \|u_{\spc W} - w\|_a^2
        &=
        a(u_{\spc W} - w, u_{\spc W} - w)
        =
        a(u_{\spc W}, u_{\spc W})
        -
        2 a(u_{\spc W}, w)
        +
        a(w, w).
    \end{align}
    Using that $u_{\spc W}$ solves
    \eqref{eq:finite-weak-form-linear-quadratic}, we obtain
        \begin{align}
        \|u_{\spc W} - w\|_a^2
        &=
        \|u_{\spc W}\|_a^2
        -
        2 \ell(w)
        +
        a(w, w)
        =
        \|u_{\spc W}\|_a^2 + 2 \E(w),
    \end{align}
    which completes the argument.
\end{proof}

\subsection{Galerkin Discretization}

From now on, let $\spc V_N \subset \spc V$ be an
$N$-dimensional linear subspace and
$\set B_N := \{\phi_1, \dots, \phi_N\} \subset \spc V_N$
a basis of $\spc V_N$.
By $u^h_N \in \spc V_N$ we denote the unique minimizer of
$\E$ over $\spc V_N$.
In this finite-dimensional setting,
$u^h_{N} \in \spc V_N$ can be written as
\begin{equation}
    \label{eq:coeff}
    u_N^h
    =
    \sum_{k=1}^N (\mat U_N^h)_k \phi_k,
\end{equation}
where
$(\mat U_N^h)_k \in \mathbb{R}$, $k=1, \dots, N$,
denote the entries of the coefficient vector
$\mat U_N^h \in \spc R^N$. Defining the stiffness matrix $\mat A \in \spc R^{N \times N}$
and the load vector $\mat b \in \spc R^N$ by
\begin{equation}
\mat A_{ij} := a(\phi_i, \phi_j)\quad\text{and}\quad
\mat b_j := \ell(\phi_j),\quad 1\le i,j\le N,
\end{equation}
respectively, the discrete weak
formulation~\eqref{eq:finite-weak-form-linear-quadratic}
with $\spc W = \spc V_N$
is equivalent to the linear system
\begin{equation}
    \label{eq:linear-algebra-problem}
    \mat A \mat U^h_N = \mat b,
\end{equation}
where $\mat A$ is symmetric positive-definite (SPD).

\begin{remark}
    \label{remark:motivation-iterative-procedure}
    The solution of
    \eqref{eq:linear-algebra-problem} may, in principle,
    be computed using a direct solver. However, for the purpose of the current work,
    we take into account the case where the space dimension $N$ becomes large enough
    to motivate the use of an iterative linear solver;
    in the context of finite element discretizations,
    this may become useful or even necessary, for instance,
    if strongly refined finite element meshes
    (in particular, in space dimensions $d\ge 2$) are involved,
    or if higher-order approximations are applied.
\end{remark}

\subsection{Iterative Solver}
\label{sec:iterative-solver}

As motivated in Remark~\ref{remark:motivation-iterative-procedure},
this subsection considers the numerical approximation of
$u^h_N \in \spc V_N$ by
generating an energy minimizing sequence
$\{u^n_{N}\}_n \subset \spc V_{N}$. 
Recalling the linear algebra formulation~\eqref{eq:linear-algebra-problem}
with $\mat A$ SPD,
it is natural to employ the CG algorithm.
Then, the $n$-th element of this sequence may be written as
\begin{equation}
    u^n_{N}
    =
    u^{0}_{N} + \sum_{\ell = 1}^n p_{N}^\ell,
\end{equation}
where $u^0_{N} \in \spc V_{N}$ is an initial guess,
and the vectors $p_{N}^\ell := u^\ell_{N} - u^{\ell-1}_{N}$
represent the updates in each iteration step, which are
pairwise orthogonal with respect to the inner product $(\cdot, \cdot)_a$.
Note that the CG method converges to the
Galerkin solution $u^h_{N}$
of~\eqref{eq:finite-weak-form-linear-quadratic}
in at most $N$ steps (in exact arithmetic),
whence we observe the identity
\begin{equation}
    u^h_{N} - u^n_{N}
    =
    u^{N}_{N} - u^n_{N}
    =
    \sum_{\ell = n+1}^{N} p_{N}^\ell.
\end{equation}
Further, exploiting $a$-orthogonality, we obtain
\begin{equation}
    \label{eq:hs-estimate}
    \|u^h_{N} - u^n_{N}\|_a^2
    =
    \sum_{\ell = n+1}^{N}\|p_{N}^\ell\|_a^2
    \geq
    \sum_{\ell = n+1}^{n+d}\|p_{N}^\ell\|_a^2
    =: \xi^d_{n},
\end{equation}
where the so-called \emph{Hestenes-Stiefel (HS) estimate~$\xi_{n}^d$}
represents a lower bound on the iteration error
in the energy norm (see~\cite{hs52}),
and $d\in [1:N-n]$ is the so-called \textit{delay parameter}.
The CG method can naturally be interpreted as
an energy minimization algorithm:
indeed,  in the $\ell$-th iteration,
the energy is minimized over the affine linear set
\begin{equation}
    \spc V_{N}^\ell
    :=
    \left\{
        u^0_{N} + v
    \,\middle|\,
        v \in
        \operatorname{span} \left\{ p^1_{N}, \dots, p^\ell_{N} \right\}
    \right\}.
\end{equation}
In the following lemma, we demonstrate that the
Hestenes-Stiefel estimate can likewise be interpreted
from an energy perspective.

\begin{lemma}
    \label{lemma:hestenes-stiefel-estimate-in-energy-fashion}
    The HS estimate~\eqref{eq:hs-estimate} can be written equivalently as
    \begin{equation}
        \label{eq:hs-estimate-energy}
        \xi_{n}^d
        = 2
        \left(
            \E(u_{N}^n) - \E(u_{N}^{n+d})
        \right).
    \end{equation}
\end{lemma}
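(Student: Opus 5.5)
Since $\xi^d_n=\sum_{\ell=n+1}^{n+d}\|p_N^\ell\|_a^2$ is a sum over consecutive CG steps, I would telescope the right-hand side of \eqref{eq:hs-estimate-energy} and show that each CG step contributes exactly its own term, i.e.
\[
2\bigl(\E(u_N^{\ell-1})-\E(u_N^{\ell})\bigr)=\|p_N^\ell\|_a^2,\qquad \ell=n+1,\dots,n+d.
\]
Summing over $\ell$ then yields the claim, since
$\sum_{\ell=n+1}^{n+d}\bigl(\E(u_N^{\ell-1})-\E(u_N^\ell)\bigr)=\E(u_N^n)-\E(u_N^{n+d})$.

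For a single step I would follow the pattern of Lemma~\ref{lemma:galerkin-orthogonality-general-subspace}. Because $\E$ is linear-quadratic and $u_N^{\ell-1}=u_N^\ell-p_N^\ell$, expanding $\E$ gives
\[
\E(u_N^{\ell-1})-\E(u_N^\ell)=-a(u_N^\ell,p_N^\ell)+\ell(p_N^\ell)+\tfrac12\|p_N^\ell\|_a^2
=-\E'[u_N^\ell](p_N^\ell)+\tfrac12\|p_N^\ell\|_a^2 .
\]
The identity therefore reduces to the orthogonality $\E'[u_N^\ell](p_N^\ell)=0$.

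This orthogonality is the only substantive step. I would take it from the energy interpretation stated just before the lemma: $u_N^\ell$ minimizes $\E$ over the affine set $\spc V_N^\ell=u_N^0+\operatorname{span}\{p_N^1,\dots,p_N^\ell\}$. The first-order condition on this set is $\E'[u_N^\ell](v)=0$ for all $v\in\operatorname{span}\{p_N^1,\dots,p_N^\ell\}$, and in particular for $v=p_N^\ell$. Equivalently, one may apply Lemma~\ref{lemma:galerkin-orthogonality-general-subspace} after shifting by $u_N^0$; this shift creates an additional linear term, so the direct expansion above is cleaner.

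As a cross-check, the result can also be obtained non-telescopically from \eqref{eq:hs-estimate}. The identity $\|u_N^h-w\|_a^2=2(\E(w)-\E(u_N^h))$ holds on $\spc V_N$, by Lemma~\ref{lemma:galerkin-orthogonality-general-subspace} together with \eqref{eq:negative-energy-linear-quadratic-energy}. Applying it at $w=u_N^n$ and at $w=u_N^{n+d}$ and subtracting, the $a$-orthogonal sums in \eqref{eq:hs-estimate} give
\[
\|u_N^h-u_N^n\|_a^2-\|u_N^h-u_N^{n+d}\|_a^2=\sum_{\ell=n+1}^{n+d}\|p_N^\ell\|_a^2=\xi_n^d ,
\]
while the left-hand side equals $2\bigl(\E(u_N^n)-\E(u_N^{n+d})\bigr)$. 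This second route needs only the $a$-orthogonality of the updates and the error representation \eqref{eq:hs-estimate}, both already stated, so I would likely present it as the main proof.
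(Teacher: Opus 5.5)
Your proposal is correct, and the route you say you would present as the main proof is exactly the paper's: subtract the identity of Lemma~\ref{lemma:galerkin-orthogonality-general-subspace} at $w=u_N^n$ and at $w=u_N^{n+d}$, so that $\|u_N^h\|_a^2$ cancels, and then evaluate the difference with the $a$-orthogonal tail sums from~\eqref{eq:hs-estimate}. Your telescoping argument is also valid and somewhat more local: it uses only the per-step Galerkin condition $\E'[u_N^\ell](p_N^\ell)=0$, not finite termination $u_N^N=u_N^h$ or the full tail representation. If you write it up, rename the summation index so it does not clash with the load functional $\ell(\cdot)$.
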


\begin{proof}
    Employing Lemma~\ref{lemma:galerkin-orthogonality-general-subspace}, and making use of \eqref{eq:hs-estimate}, we observe that
    \begin{align}
        2\left(
            \E(u^n_{N}) - \E (u^{n+d}_{N})
        \right)
        &=
        \|u^h_{N} - u^n_{N}\|_a^2
        -
        \|u^h_{N} - u^{n+d}_{N}\|_a^2
        \\
        &=
        \sum_{\ell=n+1}^{N} \|p_{N}^\ell \|_a^2
        -
        \sum_{\ell=n+d+1}^{N} \|p_{N}^\ell \|_a^2
        \\
        &=
        \sum_{\ell=n+1}^{n+d} \|p_{N}^\ell \|_a^2
        =
        \xi_n^d,
    \end{align}
    which yields the claim.
\end{proof}

Summarizing, in order to minimize the energy over $\spc V_N$,
we employ the CG algorithm as iterative solver which,
in the $\ell$-th step, minimizes the energy over the affine linear set
$\spc V^{\ell}_{N}$ and whose iteration error can be estimated
from below by the HS estimate~\eqref{eq:hs-estimate-energy}.

\subsection{Stopping Criteria}
\label{sec:stopping-criteria}

We will now present two stopping criteria
that allow to obtain an appropriate final iterate
$u^\star_{N}$ in the sequence
$\{u^n_{N}\}_n \subset \spc V_{N}$ generated by the CG algorithm.

\subsection*{(a) Energy Tail-Off Stopping Criterion}
The first stopping criterion we
introduce has been already employed in the work~\cite{ahw23}.
Roughly speaking, the idea is to stop the iteration as soon as the sequence of energy values $\{\E(u^n_{N})\}_n \subset \spc R$ begins to level-off.
To formalize this approach, let $\Phi^n_{N}$ denote the
accumulated energy decay up to the $n$-th iteration, i.e.
for $n \geq 1$, let
\begin{equation}
    \Phi^n_{N} := \E (u^0_{N}) - \E(u^n_{N})\ge 0.
\end{equation}
The iteration is stopped as soon
as  it holds
\begin{equation}
    \label{eq:original-energy-flattening-off-criterion}
    \E(u^{n-1}_{N}) - \E(u^n_{N})
    <
    \alpha_{\E} \Phi^n_{N},
\end{equation}
where $\alpha_{\E} \in (0, 1]$ is a prescribed control parameter.
In this work, however,
we slightly modify this criterion by dividing the
right-hand side of
\eqref{eq:original-energy-flattening-off-criterion}
by the iteration number $n$,
thereby turning the right-hand side into an averaged quantity
over all iterations that is more restrictive;
specifically, our new stopping criterion reads
\begin{equation}
    \label{eq:energy-tail-off-stopping-criterion}
    \E(u^{n-1}_{N}) - \E(u^n_{N})
    <
    \alpha_{\E} \frac{\Phi^n_{N}}{n},
\end{equation}
with $\alpha_{\E} \in (0, 1]$ a
control parameter as above.

\subsection*{(b) Relative Energy Reduction Stopping Criterion}
The second stopping criterion we introduce
is inspired by an approach proposed in \cite{ari04}.
For any $u^n_{N} \in \spc V_{N}$
(not necessarily resulting from the CG algorithm),
they show that, whenever we have the bound
\begin{equation}
    \label{eq:arioli-assumption}
    \|u^h_{N} - u^n_{N}\|_a \leq \gamma \|u^h_{N}\|_a,
\end{equation}
for some parameter $\gamma>0$, then it follows that
\begin{equation}
    \label{eq:arioli-consequence}
    \|u - u^n_{N}\|_a \leq \gamma \|u\|_a + 2\|u-u^h_{N}\|_a,
\end{equation}
where $u\in\spc V$ and $u^h_{N}\in\spc V_{N}$ are
the full space solution and Galerkin solution,
respectively,
see~\cite[Eq.~(11)]{ari04}.
Hence,
condition~\eqref{eq:arioli-assumption}
may be regarded as an academic stopping criterion which,
when satisfied, guarantees the inequality~\eqref{eq:arioli-consequence}.
Let us first discuss why this implication is of interest,
before addressing the task of transforming~\eqref{eq:arioli-assumption}
into a practical stopping criterion.
To this end, suppose for a moment that a strategy for generating
a sequence of finite-dimensional subspaces
$\{\spc V_N\}_{N \geq 0} \subset \spc V$ is available
such that the associated Galerkin solutions $u^h_N \in \spc V_N$
satisfy the asymptotic estimate
\begin{equation}
    \|u - u^h_N\|_a
    =
    \mathcal{O}\!\left( \bigl(\dim \spc V_N\bigr)^{-r} \right),
\end{equation}
for a rate $r\ge 1$.
The construction of such a sequence is
discussed in Section~\ref{sec:space-enrichments-and-va}.
Then, on each subspace~$\spc V_{N}$,
choosing $\gamma \propto \left(\text{dim} \spc V_N\right)^{-r}$,
and iterating until~\eqref{eq:arioli-assumption} is satisfied
for the final approximation $u^\star_{N}$,
the implication~\eqref{eq:arioli-consequence} yields
\begin{equation}
    \|u - u^\star_{N}\|_a
    =
    \mathcal{O}\!\left( \bigl(\dim \spc V_N\bigr)^{-r} \right),
\end{equation}
thus recovering the optimal convergence rate.
With this motivation in place, we now turn to the
task of transforming~\eqref{eq:arioli-assumption}
into a practical stopping criterion by deriving
computable estimates for both sides of the inequality.

A computable lower bound for the right-hand side of~\eqref{eq:arioli-assumption}
can be obtained from Lemma~\ref{lemma:galerkin-orthogonality-general-subspace}. Indeed, from
\begin{equation}
    \|u_{N}^h - u_{N}^n\|_a^2 - 2 \E(u_{N}^n)
    =
    \|u_{N}^h\|_a^2,
\end{equation}
we immediately deduce the estimate
\begin{equation}
    \label{eq:lower-bound-a-norm-galerkin}
    - 2 \E(u_{N}^n) \leq \|u_{N}^h\|_a^2.
\end{equation}
This inequality provides a practical and computable lower
bound for the right-hand-side of \eqref{eq:arioli-assumption},
i.e. the energy norm of the Galerkin solution.

Deriving an a posteriori upper bound for the
left-hand side of~\eqref{eq:arioli-assumption},
i.e. the iteration error, is a far more challenging task.
In fact, even when restricting ourselves to iterates
generated by the CG algorithm,
existing techniques typically depend on a priori
spectral information of the system matrix, which entails non-negligible computational cost (see, e.g.~\cite{MT24}), and,
more critically for our work, disregard the underlying energy structure of the problem.
For these reasons, following the approach proposed in~\cite{ari04},
we rely on the HS estimate
as an a posteriori estimator for the iteration error;
although the HS estimate yields only a lower bound on the true iteration error,
cf.~\eqref{eq:hs-estimate}
(whilst an upper bound would be required for reliability),
this approach, which was also used in the original work~\cite{ari04},
has proven effective in our numerical experiments also.
Further, in conjunction with
Lemma~\ref{lemma:hestenes-stiefel-estimate-in-energy-fashion},
employing the HS estimate fits naturally into our energy-based framework.

Summarizing, we substitute the right-hand side of~\eqref{eq:arioli-assumption}  
with its lower bound~\eqref{eq:lower-bound-a-norm-galerkin},  
while the left-hand side of~\eqref{eq:arioli-assumption} is approximated by
the HS estimate in its energy form
(cf.~Lemma~\ref{lemma:hestenes-stiefel-estimate-in-energy-fashion}).
Further, as motivated above, for the $\mathbb{P}_1$ finite element discretizations in the numerical experiments in Section~\ref{sec:numerical-experiments}, we set the parameter $\gamma$ in~\eqref{eq:arioli-assumption}
to $\gamma^2 := \nicefrac{\alpha_{\gamma}}{\text{dim}\spc V_N}$ as we expect an optimal approximation rate of $r=\nicefrac12$;
here, $\alpha_\gamma \in (0, 1]$ is a control parameter.
Finally, this yields the following computable a posteriori stopping criterion:
\begin{equation}
    \label{eq:energy-arioli-stopping-criterion}
    \E (u_{N}^n) - \E (u_{N}^{n+d})
    \leq
    - \frac{\alpha_\gamma}{\text{dim} \spc V_N} \E(u_{N}^n),
\end{equation}

\noindent
where $d$ denotes the delay parameter used in the HS estimate.
This means that
the iteration is terminated at the $n$-th step once the relative
energy reduction---measured between the current iterate $u_{N}^n$
and the delayed iterate $u_{N}^{n+d}$---falls below the threshold
$\gamma^2 = \nicefrac{\alpha_\gamma}{\text{dim}\spc V_N}$.
We emphasize that this stopping criterion is mathematically equivalent
to the one proposed in~\cite{ari04}.
The key distinction is that we have reformulated it to
emphasize its energy-based interpretation.

\begin{remark} 
\label{remark:stopping-criteria}
We conclude this Section with a few practical aspects.
\begin{enumerate}[(a)]
\item In the article~\cite{ari04} a strategy to 
adaptively choose the delay parameter
on an ad hoc basis has been presented: Starting from
an initial value $d_{\text{init}} \geq 1$,
the delay parameter is updated by adding a
fixed increment $\Delta d \geq 1$ whenever the condition
\begin{equation}
    \label{eq:adaptive-delay}
    \xi^d_{n+1} > \tau \xi^d_n
\end{equation}
is met, where $\tau > 1$ is a control parameter, i.e., we set $d \leftarrow d + \Delta d$ as long as~\eqref{eq:adaptive-delay} holds true.
The convergence test~\eqref{eq:energy-arioli-stopping-criterion}  
is only applied once~\eqref{eq:adaptive-delay} is no longer satisfied.
In this paper, we adopt this adaptive strategy
in our numerical experiments in Section~\ref{sec:numerical-experiments}.

\item We have also tested a combined
stopping criterion that consecutively applied the energy tail-off
criterion~\eqref{eq:energy-tail-off-stopping-criterion}
for global convergence and the relative energy
reduction criterion~\eqref{eq:energy-arioli-stopping-criterion}
for local convergence (or the bound~\eqref{eq:energy-arioli-stopping-criterion-abs} for more general situations to be discussed below in Section~\ref{sec:extended-applicability}). The resulting performance and convergence behavior were comparable to those
obtained with the energy tail-off criterion alone,
thereby providing no substantial improvements with regard to
the qualitative or quantitative regime of the scheme.
We therefore omit these results from the present discussion.

\item%
We note that the use of a preconditioner in the CG algorithm
does not interfere with the stopping criteria
presented in this Section.
In particular, when employing the preconditioned conjugate gradient (PCG)
method instead of CG, the stopping criteria can be applied exactly as
described above without any modifications.
Indeed, if $\mat{x}_k$, $\hat{\mat{x}}_k$ denote the
iterates of the CG and corresponding PCG algorithm,
it can be verified that it holds
\begin{equation}
    \E (\mat{x}_k)
    =
    \hat{\E} (\hat{\mat{x}}_k),
\end{equation}
where $\E$ and $\hat \E$ denotes the energy of the system without and with preconditioning,
respectively.

\item%
To reduce the sensitivity of the stopping
criterion to local energy flattenings, one may evaluate
criterion~\eqref{eq:energy-tail-off-stopping-criterion} in batches,
that is, only for every $n_{\text{batch}}$-th iterate, where
$n_{\text{batch}} \geq 1$ is an integer-valued control parameter.
This strategy has been adopted in
Section~\ref{sec:numerical-experiments}.

\item%
In order to evaluate stopping
criterion~\eqref{eq:energy-arioli-stopping-criterion}
for the $n$-th iterate,
we need to compute the $(n+d)$-th iterate as well.
As we focus on assessing the effectiveness of the stopping criterion
from an academic perspective,
the errors computed and shown in Section~\ref{sec:numerical-experiments}
always correspond to the $n$-th iterate,
even though the $(n+d)$-th iterate is available.
In a practical setting, however,
one would naturally go along with the $(n+d)$-th iterate.
Finally, we remark that the iteration counts shown in the corresponding
plots do include the additional $d$ steps required to evaluate
the relative energy reduction
stopping criterion~\eqref{eq:energy-arioli-stopping-criterion}.

\end{enumerate}
\end{remark}

\subsection{Application to more general minimization problems}
\label{sec:extended-applicability}

The iterative approximation of the Galerkin solution
of the linear-quadratic energy functional~\eqref{eq:weak-form-linear-quadratic} presented above,
including the choice of iterative solver and stopping criteria,
is primarily motivated by energy minimization principles.
This observation naturally suggests extending this iterative strategy
to more general energy minimization problems,
where the underlying energy functional
merely satisfies Assumption~\ref{assumption}.
In this setting, however, a few adjustments need to be made:
\begin{enumerate}[(a)]

\item  Firstly, the a posteriori stopping criterion~\eqref{eq:energy-arioli-stopping-criterion} requires a slight modification. Indeed, for a monotonically decreasing sequence
    $\{\E(u_N^n)\}_n$, note that the left-hand side of
    \eqref{eq:energy-arioli-stopping-criterion} is non-negative (cf.~Lemma~\ref{lemma:galerkin-orthogonality-general-subspace}), and thus renders~\eqref{eq:energy-arioli-stopping-criterion}
    ineffective whenever the energy minimum is positive (which, in contrast to the linear--quadratic case, is possible). A simple yet effective remedy is to employ absolute values, that is,
    to replace~\eqref{eq:energy-arioli-stopping-criterion} by
    \begin{equation}
        \label{eq:energy-arioli-stopping-criterion-abs}
        \left|
            \E(u_N^n) - \E(u_N^{n+d})
        \right|
        \le
        \frac{\alpha_\gamma}{\text{dim} \spc V_N}
        \left|
            \E(u_N^n)
        \right|.
    \end{equation}
    This modification allows the stopping criterion to be interpreted
    in terms of the relative change in energy over $d$ iterations and
    extends its applicability to more general energy minimization problems.
    We emphasize that, in the linear-quadratic setting and sufficiently close
    to the minimizer, criterion~\eqref{eq:energy-arioli-stopping-criterion}
    is \emph{equivalent} to~\eqref{eq:energy-arioli-stopping-criterion-abs}.

\item Secondly, the linear CG algorithm is no longer applicable when the weak formulation~\eqref{eq:finite-dimensional-space-problem} is nonlinear.
Instead, suitable alternatives are provided by
nonlinear CG methods
(for a comprehensive overview of nonlinear CG algorithms
and their global convergence properties, we refer to~\cite{HZ06}).
In corresponding numerical experiments
in Section~\ref{sec:nonlinear-experiments},
we employ the existing implementation
\verb|scipy.optimize.fmin_cg|~\cite{scipy},
which provides a nonlinear CG method based on the approach
of Polak and Ribi\`ere~\cite{nocedal}.
However, the energy-based stopping criteria introduced above do,
in principle, remain applicable without any modifications
(cf.~Section~\ref{sec:nonlinear-experiments}).
For future reference we also introduce
the \emph{default} stopping criterion implemented in
\verb|scipy.optimize.fmin_cg|, which reads
\begin{equation}
    \label{eq:stopping-criterion-default}
    \|\nabla \E(u^n_{N})\|_{\infty} \le g_{\text{tol}},
\end{equation}
where
$\nabla \E(u^n_{N}) \in \mathbb{R}^{\text{dim}\spc V_N}$
denotes the gradient of $\E$
evaluated at the current iterate $u^n_{N}$,
and $g_{\text{tol}} \in [0, \infty)$
is a user-chosen control parameter.
The default stopping criterion terminates
the iteration at the $n$-th step, as soon
as~\eqref{eq:stopping-criterion-default} is satisfied.
\end{enumerate}

\section{Space enrichment and variational adaptivity}
\label{sec:space-enrichments-and-va}

In this Section, given a (finite-dimensional) subspace
$\spc V_N \subset \spc V$ and a final iterate $u^\star_N \in \spc V_N$,
we present an energy-based, adaptive enrichment strategy to
construct the subsequent (refined) subspace
$\spc V_{N+1} \subset \spc V_N$ aiming for the largest possible local energy reduction. 
We first present the algorithm in abstract form
for a general Hilbert space $\spc V$,  
then give a concrete realization for $\mathbb{P}_1$
finite element spaces on two-dimensional domains,  
and finally discuss computational aspects of the proposed framework.
Throughout this Section, we assume that
$\E : \spc V \to \mathbb{R}$ satisfies Assumption~\ref{assumption}.

\subsection{Variational Adaptivity}
\label{sec:variational-adaptivity}

Given a subspace $\spc V_{N} \subset \spc V$
and the final iterate $u^\star_N \in \spc V_N$,
let $\mat \xi = \{ \xi_1, \dots, \xi_L \} \subset \spc V$ be 
a set of linearly independent elements such that
the intersection $\spc V_N \cap \mat \xi = \emptyset$
is empty, and define the (low-dimensional) subspace
$\spc Y := \text{span} \{ u^\star_{N}, \xi_1, \dots, \xi_L \}$.
In this setting, we call $\mat \xi$ an \textit{enrichment set}
and $\spc Y$ a \textit{(local) enrichment space};
in the context of finite element discretizations,
the space $\spc Y$ is spanned by one global mode,
namely the approximation~$u_{N}^\star$,
and $L$ additional degrees of freedom that act locally in the
vicinity of a specific element in order to detect and resolve
potential fine scale features in the full-space solution.
By considering~\eqref{eq:finite-dimensional-space-problem}
with $\spc W = \spc Y$, we obtain the (low-dimensional) problem of finding $u^h_{\spc Y} \in \spc Y$ such that it holds
\begin{equation}
    \label{eq:local-general-nonlinear-weak-problem}
    \E'\left[u^h_{\spc Y}\right](y)
    = 0
    \quad
    \forall y \in \spc Y.
\end{equation}
We call $u^h_{\spc Y}$ a \emph{locally improved solution},
and note that it is equivalently characterized as the
unique minimizer of $\E$ over $\spc Y$; in particular,
since $u^\star_{N} \in \spc Y$,
we immediately obtain the energy reduction property
\begin{equation}
    \E(u^h_{\spc Y}) \leq \E(u^\star_{N}).
\end{equation}
We can thus define the
non-negative \emph{energy decay}
\begin{equation}
    \label{eq:energy-decay}
    \Delta \E (u^\star_{N}, \mat \xi)
    :=
    \E (u^\star_{N}) - \E (u^h_{\spc Y})\ge 0.
\end{equation}
The energy decay
$\Delta \E (u^\star_{N}, \mat \xi)$
provides a computable a priori lower bound to the
\emph{potential energy reduction} resulting from
enriching the space $\spc V_{N}$ with the
enrichment set $\mat \xi$, i.e. by switching from
$\spc V_{N}$ to the augmented space
$\spc V_{N} \oplus \text{span}~\mat \xi$.

The above considerations motivate the following strategy:
Let $\{\mat \xi_\ell\}_{\ell=1, \dots, M}$ be
a family of enrichment sets.
For each of the enrichment sets
we can compute the corresponding energy decay~\eqref{eq:energy-decay}.
Then, choose an index subset $\set I \subset \{1, \dots, M\}$
of minimal cardinality that fulfills the classical
Dörfler marking criterion~\cite{dör96}
; in the language of energy decays, this criterion reads
\begin{equation}
    \label{eq:dörfler}
    \sum_{\ell \in \mathcal{I}} \Delta \E (u^\star_{N}, \mat \xi_\ell)
    \geq 
    \theta \sum_{\ell = 1}^{M}
    \Delta \E (u^\star_{N}, \mat \xi_\ell),
\end{equation}
where $\theta \in (0, 1)$ is the \textit{Dörfler parameter}.
Finally, define
\begin{align}
    \label{eq:space-enrichment}
    \spc V_{N+1}
    & :=
    \spc V_{N} 
    \oplus \text{span}~
    \bigcup_{\ell \in \mathcal{I}}
    \mat \xi_\ell,
    \intertext{and apply the natural embedding}
    \label{eq:canonical-embedding}
     u^0_{N+1}
    & :=
    u^\star_{N}\hookrightarrow \spc V_{N+1},\nonumber
\end{align}
to define a starting point for approximating the minimizer
$u^h_{N+1}$ of $\E$ within the subsequent subspace $\spc V_{N+1}$.
The resulting process that iteratively generates a nested sequence
of subspaces is outlined in Algorithm~\ref{alg:variational-adaptivity}
and is referred to as \textit{variational adaptivity}.
The application of this algorithm
within a finite element context is discussed in
Section~\ref{sec:eva}.

\begin{remark}\label{rem:solve-local-system}
In the linear-quadratic case (cf. Section~\ref{sec:simplified-model-problem}),  
problem~\eqref{eq:local-general-nonlinear-weak-problem}
reduces to an $(L_j+1)\times(L_j+1)$ linear system,
where $L_j$ is the number of elements in $\mat\xi_j$.
Solving this system exactly is computationally inexpensive,
as $L_j$ is usually small. More generally, however, if the underlying energy only
satisfies Assumption~\ref{assumption},  
it is typically impossible to solve the local
problem~\eqref{eq:local-general-nonlinear-weak-problem} in closed form as it may be nonlinear.
In such cases, one can accurately approximate the exact solution by applying an appropriate nonlinear solver, 
e.g., a Newton-type scheme, that results in a sequence of $(L_j+1)\times(L_j+1)$ linear systems.
\end{remark}

\begin{algorithm}
    \caption{Variational adaptivity}
    \label{alg:variational-adaptivity}
    \begin{algorithmic}[1]  
        \STATE%
        Prescribe the Dörfler parameter $\theta \in (0, 1)$.
        \STATE%
        Input a finite-dimensional subspace $\spc V_{N} \subset \spc V$,
        a suitable approximation $u^\star_{N} \in \spc V_{N}$,
        and a family of enrichment sets
        $\{ \mat \xi_\ell \}_{\ell=1,\dots, M}$
        not contained in $\spc V_{N}$.
        \FOR{$\ell=1, \dots, M$}
            \STATE%
            \label{alg:variational-adaptivity-solve}
            SOLVE the (low-dimensional)
            problem~\eqref{eq:local-general-nonlinear-weak-problem}
            for $u_{\spc Y}^h \in \spc Y$.
            \STATE%
            \label{alg:variational-adaptivity-compute}
            Compute the corresponding energy decay
            \eqref{eq:energy-decay}.
        \ENDFOR
        \STATE%
        CHOOSE an index subset $\set I \subset \{1, \dots, M\}$
        based on the Dörfler marking criterion~\eqref{eq:dörfler}.
        \STATE%
        ENRICH the space $\spc V_{N}$ according to
        \eqref{eq:space-enrichment}, thereby yielding the subsequent
        linear subspace $\spc V_{N+1} \supset \spc V_{N}$.
    \end{algorithmic}
\end{algorithm}

\subsection{Edge-based Variational Adaptivity}
\label{sec:eva}

Let $\Omega \subset \mathbb{R}^2$ be an open and bounded polygonal
domain and $\Hone$ the first-order Sobolev
space with zero trace on $\partial \Omega$.
Then, in this subsection, we illustrate how the abstract framework
introduced in Section~\ref{sec:variational-adaptivity}
can be used to adaptively refine a two-dimensional simplicial mesh
underlying a $\mathbb{P}_1$ finite element discretization of $\Hone$.
Specifically, we discuss how edge bisections
can be interpreted as local space enrichments
and the resulting augmentation~\eqref{eq:space-enrichment}
as mesh refinement.
In combination with the iterative solver developed in
Section~\ref{sec:iterative-solver-and-stopping-criteria}, this
enables the construction of
an adaptive and fully iterative
algorithm
for minimizing energy functionals over $\Hone$ using
$\spc P_1$ finite element discretizations
in two spatial dimensions,
to be employed in numerical experiments in
Section~\ref{sec:numerical-experiments}.
Note that this implementation of variational adaptivity
contrasts earlier works~\cite{HSW21,ahw23,hw25},
which have employed a similar approach,
especially in the sense that
they have used \emph{local red refinement} of individual
elements to compute the energy reduction indicators,
whereas we rely on more localized edge bisections.

\subsubsection{The $\mathbb{P}_1$ Finite Element Space}
For $N \geq 0$, we consider a triangulation $\mathcal{T}_N$ consisting of disjoint
open triangles $\{T\}_{T\in\mathcal{T}_N}$ that form a regular and shape-regular
mesh partition of the domain $\Omega$.
The corresponding $\spc P_1$ conforming finite element space is then given by
\begin{equation}
    \label{eq:p1-fem-space}
    \spc P_1(\mathcal{T}_N)
    :=
    \left\{
            \varphi \in \Hone
            \, \Big| \,
            \forall
            T \in \mathcal{T}_N :
            \varphi|_T \in \mathbb{P}_1(T)
    \right\}.
\end{equation}
Further, let $\mathcal{E}_N$ denote the set of all edges in the triangulation~$\mathcal{T}_N$,
and $\mathcal{V}_N$ the set of all of its vertices.
For future reference, we define
$\set E^I_N \subset \set E_N$ and $\set V^I_N \subset \set V_N$
to be the set of all interior edges and of all interior vertices, respectively.
In this context, we define $d_N := \text{dim} \spc V_N$
to be the number of degrees of freedom in
the discrete space $\spc P_1(\mathcal{T}_N)$, i.e.
the number of interior vertices
$d_N = \# \set V^I_N$.
As a basis of $\spc P_1(\mathcal{T}_N)$ we introduce the usual
Lagrange hat basis functions
$\{ \phi_i \}_{i=1, \dots, d_N} \subset \spc P_1(\mathcal{T}_N)$,
uniquely defined by $\phi_i(z_j) = \delta_{ij}$,
$z_j \in \set V^I_N$, where $\delta_{ij}$ denotes the Kronecker delta.
Then, any discrete approximation $u_N \in \spc P_1(\mathcal{T}_N)$
can be written as
\begin{equation}
    \label{eq:discrete-element-p1-fem}
    u_N = \sum_{j=1}^{d_N} U^j_N \phi_j,
\end{equation}
where $U^j_N \in \mathbb{R}$, $1\leq j \leq d_N$, cf.~\eqref{eq:coeff}.

\begin{remark}
    \label{remark:va-not-restricted-to-p1}
    Throughout this work, we restrict ourselves to piecewise linear finite elements.  
    This choice allows us to present the main concepts of our approach in the
    least technical setting.
    The variational adaptivity framework, however, is not limited to linear elements.
    In fact, the ideas can be generalized to higher-order
    finite elements and even to $hp$-methods (see, e.g.~\cite{bsw25}).
\end{remark}

\subsubsection{Edge-Based Space Enrichments}
\label{sec:eva-space-enrichments}

The following procedure is a concrete, edge-based
realization of \textit{variational adaptivity},
translating edge bisections into local space enrichments.
For each interior edge $E \in \set E^I_N$,
we define a corresponding enrichment set~$\mat \xi_E$
(here, a singleton)
that is constructed via bisection of the edge $E$ as follows:
Let $E = \operatorname{conv}\{z_i, z_j\} \in \set E^I_N$
be any interior edge connecting two vertices $z_i, z_j \in  \set V_N$,
where $\operatorname{conv}\{\cdot\}$ denotes (the open interior of) the convex hull.
Then, consider the \textit{local patch} $\omega_E$ comprising the two elements
$T_{\sharp}, T_{\flat} \in \set T_N$ that share the edge $E$, i.e.
$\omega_E := \{T_{\sharp}, T_{\flat}\}$.
Further, let $z_{\sharp}, z_{\flat} \in \set V_N$ denote the remaining vertices
of the local patch on either side of $E$;
a sketch of this configuration is shown in Figure~\ref{fig:eva}.
\begin{figure}
    \centering
    \begin{minipage}{.5\textwidth}
        \centering
        \begin{tikzpicture}
    \coordinate (C) at (0,0);
    \coordinate (L) at (-2.5,0);
    \coordinate (R) at (2.5,0);
    \coordinate (U) at (0,2.5);
    \coordinate (D) at (0,-2.5);

    \node at (L) [circle, fill=black, inner sep=1pt] {};
    \node at (R) [circle, fill=black, inner sep=1pt] {};
    \node at (D) [circle, fill=black, inner sep=1pt] {};
    \node at (U) [circle, fill=black, inner sep=1pt] {};

    \draw[thick, fill=blue, fill opacity=0.2] (L) -- (D) -- (U) -- cycle;
    \draw[thick, fill=blue, fill opacity=0.2] (R) -- (D) -- (U) -- cycle;

    \node[above right] at (U) {$z_j$};
    \node[below right] at (D) {$z_i$};
    \node[right] at (R) {$z_{\sharp}$};
    \node[left] at (L) {$z_{\flat}$};

    \node at (barycentric cs:L=1,D=1,U=1) {$T_{\flat}$};
    \node at (barycentric cs:R=1,D=1,U=1) {$T_{\sharp}$};
\end{tikzpicture}
    \end{minipage}%
    \begin{minipage}{0.5\textwidth}
        \centering
        \begin{tikzpicture}
    \coordinate (C) at (0,0);
    \coordinate (L) at (-2.5,0);
    \coordinate (R) at (2.5,0);
    \coordinate (U) at (0,2.5);
    \coordinate (D) at (0,-2.5);

    \draw[thick, fill=blue, fill opacity=0.2] (L) -- (U) -- (D) -- cycle;
    \draw[thick, fill=blue, fill opacity=0.2] (R) -- (U) -- (D) -- cycle;

    \draw[dotted, thick] (L) -- (C);
    \draw[dotted, thick] (R) -- (C);

    \node at (L) [circle, fill=black, inner sep=1pt] {};
    \node at (R) [circle, fill=black, inner sep=1pt] {};
    \node at (D) [circle, fill=black, inner sep=1pt] {};
    \node at (U) [circle, fill=black, inner sep=1pt] {};
    
    \filldraw[fill=blue!20, draw=black, line width=1pt] (C) circle[radius=2pt];

    \node[above right] at (U) {$z_j$};
    \node[below right] at (D) {$z_i$};
    \node[right] at (R) {$z_{\sharp}$};
    \node[left] at (L) {$z_{\flat}$};
    \node[above right] at (C) {$\bar z_{ij}$};

    \node at (barycentric cs:L=1,C=1,U=1) {$T_{\flat}^2$};
    \node at (barycentric cs:L=1,C=1,D=1) {$T_{\flat}^1$};
    \node at (barycentric cs:R=1,C=1,U=1) {$T_{\sharp}^2$};
    \node at (barycentric cs:R=1,C=1,D=1) {$T_{\sharp}^1$};
\end{tikzpicture}
    \end{minipage}
    \caption{
        Local patches associated to an interior edge
        $E = (z_i, z_j)$.
        Left: local patch $\omega_E$ before the local refinement.
        Right: local modified patch $\widetilde{\omega}_E$
        obtained by bisection of the edge $E$ and removing the
        hanging node $\bar z_{ij}$ by connecting it to both vertices $z_{\sharp}$ and $z_{\flat}$.
    }
    \label{fig:eva}
\end{figure}
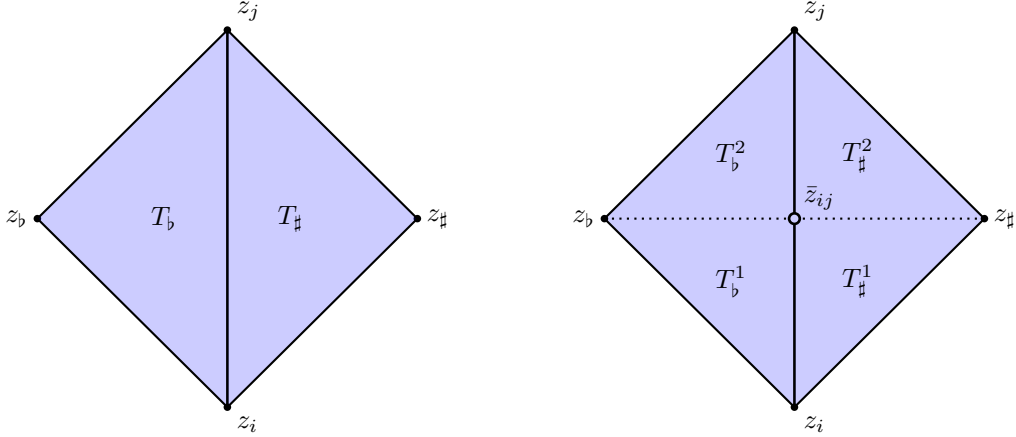
Define $\bar z_{ij} \in E$ as the midpoint of $E$, i.e.
$\bar z_{ij} := \nicefrac{1}{2} (z_i + z_j)$,
and consider the open triangles
\begin{align}
T_{\sharp}^1 &:= \text{conv}~\{z_i, \bar z_{ij}, z_{\sharp}\},&
T_{\sharp}^2 := \text{conv}~\{z_j, \bar z_{ij}, z_{\sharp}\},\\
T_{\flat}^1 &:= \text{conv}~\{z_i, \bar z_{ij}, z_{\flat}\},&
T_{\flat}^2 := \text{conv}~\{z_j, \bar z_{ij}, z_{\flat}\},
\end{align}
which are the triangles obtained by splitting both $T_{\sharp}$ and $T_{\flat}$
through connections from $\bar z_{ij}$ to $z_{\sharp}$ and $z_{\flat}$, respectively.
We define the modified (refined) patch as
$\widetilde{\omega}_E := \{T_{\sharp}^1, T_{\sharp}^2, T_{\flat}^1, T_{\flat}^2\}$.
Next, we introduce a hat function
$\phi_E \in \Hone$ associated to the new vertex $\bar z_{ij}$ that is
uniquely defined by requiring $\phi_E|_\tau \in \spc P_1(\tau)$
for all elements $\tau \in \widetilde{\omega}_E \cup (\set T_N \setminus \omega_E)$,
and
\begin{equation}
    \forall z \in \set V_N\cup\{\bar z_{ij}\} : \quad\phi_E(z)
    = 
    \begin{cases}
        1&\text{if } z=\bar z_{ij},\\
        0&\text{otherwise.}
    \end{cases}
\end{equation}
We then define the enrichment set associated with
edge $E$ as the singleton $\mat \xi_E := \{\phi_E\}$.
Following this procedure for all interior edges yields a
family of enrichment sets $\{\mat \xi_E\}_{E \in \set E^I}$.
Then, we proceed according to Algorithm~\ref{alg:variational-adaptivity},
i.e. we compute the non-negative energy decays
$\Delta \E (u^\star_{N}, \mat \xi_E)$ from~\eqref{eq:energy-decay}
and select a suitable subset that fulfills the Dörfler
marking criterion~\eqref{eq:dörfler}.
The corresponding edges we refer to as \textit{marked edges}.
The proposed edge-based variational adaptivity procedure is summarized in
Algorithm~\ref{alg:edge-based-variational-adaptivity}.

\subsection{Computational Aspects}

The practical application of the abstract framework above,
in particular regarding the solution
of~\eqref{eq:local-general-nonlinear-weak-problem}
and the space enrichment~\eqref{eq:space-enrichment},
will be discussed in the specific context of $\mathbb{P}_1$
finite element discretizations in the following subsections.

\subsubsection{Mesh Refinement}

When working with $\spc P_1$ finite element spaces,
the abstract space enrichment procedure in~\eqref{eq:space-enrichment}
needs some fine adjustments.
To see why, recall that, for any $h$-FEM,
additional degrees of freedom are introduced through mesh refinement.
Such refinements, however, must be performed carefully
in order to maintain mesh regularity and to prevent degeneration.
Edge-based Variational Adaptivity, as outlined in Section~\ref{sec:eva},
leaves us with a subset of marked edges.
We make sure to both bisect all marked edges and
to preserve the mesh quality by inputting the mesh
and its marked edges into the well-known newest-vertex bisection (NVB)
algorithm as described in~\cite[Section~5]{fpw11}.
This refinement procedure then replaces~\eqref{eq:space-enrichment},
i.e. the refined mesh defines the subsequent $\spc P_1$
finite element space.

\begin{algorithm}
    \caption{Edge-based variational adaptivity}
    \label{alg:edge-based-variational-adaptivity}
    \begin{algorithmic}[1]  
            \STATE%
            Prescribe the Dörfler parameter $\theta \in (0, 1)$.
            \STATE%
            Input a finite element mesh $\set T_{N}$ and a
            finite element function $u_{N}^\star \in \spc P_1 (\mathcal{T}_N)$.
            \FOR{$E \in \set E^I_N$}
                \STATE%
                SOLVE~\eqref{eq:local-general-nonlinear-weak-problem} with
                $\spc Y = \text{span}~(\{u^\star_{N}\} \cup \mat \xi_E)$
                for $u^h_{\spc Y} \in \spc Y$.
                \label{alg:edge-based-variational-adaptivity:solve}
                \STATE%
                Compute the corresponding energy decay
                $\Delta \E (u^\star_{N}, \mat \xi_E)$,
                cf.~\eqref{eq:energy-decay}.
                \label{alg:edge-based-variational-adaptivity:energy-diff}
            \ENDFOR
            \STATE%
            MARK a subset of the interior edges $\set E^I_N$
            of minimal cardinality fulfilling the Dörfler marking
            criterion~\eqref{eq:dörfler} for refinement.
            \STATE%
            REFINE the mesh $\mathcal{T}_N$ based on the marked edges
            using newest-vertex bisection (NVB) as described in~\cite[Section~5]{fpw11}
            to obtain the refined mesh $\mathcal{T}_{N+1}$
            and build the corresponding finite element space
            $\spc P_1(\mathcal{T}_{N+1})\supset\spc P_1(\mathcal{T}_N)$.
    \end{algorithmic}
\end{algorithm}

\subsubsection{Computation of energy decay}
\label{sec:eva-nonlinear}

If the energy $\E : \spc V \to \mathbb{R}$
merely satisfies Assumption~\ref{assumption},  
we generally need to resort to a two-fold approximation
of the local energy decay~\eqref{eq:energy-decay}.  
First, the local minimization
problem~\eqref{eq:local-general-nonlinear-weak-problem}  
cannot typically be solved exactly, cf. Remark~\ref{rem:solve-local-system}.
Instead, we compute an approximation  
$\tilde u_{\spc Y} \approx u^h_{\spc Y}$ as follows.  
Assuming that $\E$ is twice Gâteaux differentiable,
we set  
$\delta := \tilde u_{\spc Y} - u_N^\star$
and consider the first-order approximation  
of~\eqref{eq:local-general-nonlinear-weak-problem}, i.e.

\begin{equation}
    \label{eq:first-order-approximation-locally-improved-solution}
    \E''[u^\star_{N}](\delta, y)
    =
    -\E'[u^\star_{N}](y)
    \quad
    \forall y \in \spc Y,
\end{equation}
whose solution can be obtained by solving the corresponding
(low-dimensional) linear system.
Second, given the solution $\tilde u_{\spc Y}$
of~\eqref{eq:first-order-approximation-locally-improved-solution},
the corresponding energy decay would be
obtained by evaluating~\eqref{eq:energy-decay}
with $u^h_{\spc Y}$ replaced by $\tilde u_{\spc Y}$.
However, performing this evaluation for every enrichment set
can become prohibitively expensive
(in the context of finite elements this becomes obvious
due to the global integrals involved in the energy).
Hence, we employ a computationally cheaper approximate predictor.
To this end, applying the main theorem of calculus, we note that it holds
\begin{equation}
    \label{eq:local-energy-reduction-predictor-exact}
    \E(\tilde u_{\spc Y})
    -
    \E(u^\star_{N})
    =
    \E'[u^\star_{N}](\delta)
    +
    \int_0^1
    \left(
        \E'[u^\star_{N} + s \delta](\delta)
        -
        \E'[u^\star_{N}](\delta)
    \right)
    \dd s.
\end{equation}
We use the first term on the right-hand side as a first-order
approximation for the negative energy decay, that is
\begin{equation}
    \label{eq:local-energy-reduction-predictor-approximation-first-order}
    \E(\tilde u_{\spc Y})
    -
    \E(u^\star_{N})
    \approx
    \E'[u^\star_{N}](\delta).
\end{equation}
By \eqref{eq:first-order-approximation-locally-improved-solution}
and the strong convexity of $\E$, we observe that
this predictor is guaranteed to be non-positive.
Incidentally, applying a second-order Taylor expansion, viz.
    \begin{equation}
        \label{eq:energy-expansion-second-order}
        \E'[u^\star_{N} + s\delta](\delta)
        =
        \E'[u^\star_{N}](\delta)
        + s \E''[u^\star_{N}](\delta,\delta) + \mathcal{O}(\delta^3),
    \end{equation}
and exploiting~\eqref{eq:first-order-approximation-locally-improved-solution} with $y=\delta$, 
the integral in~\eqref{eq:local-energy-reduction-predictor-exact} can be expressed by
\[
\int_0^1
    \left(
        \E'[u^\star_{N} + s \delta](\delta)
        -
        \E'[u^\star_{N}](\delta)
    \right)
    \dd s
    =
    \frac12 \E''[u^\star_{N}](\delta,\delta)+\mathcal{O}(\delta^3)
        =
    -\frac12 \E'[u^\star_{N}](\delta)+\mathcal{O}(\delta^3).
\]
Thus, we infer that
    \begin{equation}
        \label{eq:local-energy-reduction-predictor-approximation-second-order}
        \E (\tilde u_{\spc Y}) - \E (u^\star_N)
        =
        \frac{1}{2}\E'[u^\star_{N}](\delta) + \mathcal{O}(\delta^3).
    \end{equation}
Consequently, in the Dörfler marking~\eqref{eq:dörfler}
(where constant factors cancel), using the first-order predictor
\eqref{eq:local-energy-reduction-predictor-approximation-first-order}
is equivalent to neglecting the $\mathcal{O}(\delta^3)$ terms in the above energy-decay
approximation~\eqref{eq:local-energy-reduction-predictor-approximation-second-order}.
In summary, the approximation of the
local energy decay is indeed two-fold:
We approximate the exact minimizer
$\tilde u_{\spc Y} \approx u^h_{\spc Y}$
using~\eqref{eq:first-order-approximation-locally-improved-solution},
and the energy decay corresponding to $\tilde u_{\spc Y}$
using~\eqref{eq:local-energy-reduction-predictor-approximation-first-order}
(or equivalently~\eqref{eq:local-energy-reduction-predictor-approximation-second-order}).

We continue by briefly discussing how the local energy decays can be computed in practice.
In the special case where the energy functional
$\E:\spc V\to\mathbb{R}$ is linear-quadratic
(cf. Section~\ref{sec:simplified-model-problem}),
we may compute both the locally improved solution
and its corresponding energy decay exactly.
To see this, consider, for example, edge-based variational adaptivity
(cf. Section~\ref{sec:eva}).
Then, problem~\eqref{eq:local-general-nonlinear-weak-problem}
is equivalent to the $2\times 2$ linear system
\begin{equation}
    \label{eq:eva-local-problem}
    \begin{pmatrix}
        a\left(u^\star_N, u^\star_N\right) & a\left(u^\star_N, \phi_E\right) \\
        a\left(\phi_E, u^\star_N\right) & a\left(\phi_E, \phi_E\right)
    \end{pmatrix}
    \begin{pmatrix}
        \alpha \\
        \beta
    \end{pmatrix}
    =
    \begin{pmatrix}
        \ell(u^\star_N) \\
        \ell(\phi_E)
    \end{pmatrix},
\end{equation}
such that $u^h_{\spc Y} = \alpha u^\star_N + \beta \phi_E$,
where $\alpha, \beta \in \mathbb{R}$ solve~\eqref{eq:eva-local-problem}.
Note that the (global) terms $a(u^\star_N, u^\star_N)$ and $\ell (u_N^\star)$ are both
independent of individual edges $E\in\set E^I_N$,
and therefore, need to be computed once only.
Moreover, owing to the highly localized support of the basis function $\phi_E$,  
the remaining entries of both the left-hand side matrix and
the right-hand side vector in~\eqref{eq:eva-local-problem},
which depend on $E$, reduce to local integrals.
In addition, we note that the $2 \times 2$
linear system~\eqref{eq:eva-local-problem} admits a closed-form solution.
In this setting, the energy decay~\eqref{eq:energy-decay}
reduces to
\begin{equation}
    \label{eq:eva-energy-decay-per-edge}
    \Delta \E(u_N^\star, \mat \xi_E)
    =
    \E (u^\star_N) - \E(u^h_{\spc Y})
    =
    \frac{(1-\alpha)^2}{2} a(u^\star_{N}, u^\star_{N})
    - \beta (1-\alpha) a(u^\star_{N}, \phi_E)
    +\frac{\beta^2}{2} a(\phi_E, \phi_E).
\end{equation}
We observe that the computation of the energy decay per
edge~\eqref{eq:eva-energy-decay-per-edge} is highly parallelizable,
and essentially involves only two global evaluations for the entire mesh,
i.e. the quantities $a(u^\star_{N}, u^\star_{N})$ and $\ell(u^\star_{N})$.
    
    For a general energy functional $\E : \spc V \to \mathbb{R}$
    satisfying Assumption~\ref{assumption},
    problem~\eqref{eq:first-order-approximation-locally-improved-solution}
    inherits the same structure as~\eqref{eq:eva-local-problem},
    where the size of the system is determined by the number of elements
    in the enrichment set at hand. 

\begin{remark}
    For a linear-quadratic energy functional $\E$,
    the approximations~\eqref{eq:first-order-approximation-locally-improved-solution}
    and~\eqref{eq:local-energy-reduction-predictor-approximation-first-order}
    yield an equivalent marking strategy as performing the exact computations.
    To see this, consider the following.
    First, the linearized problem
    \eqref{eq:first-order-approximation-locally-improved-solution}
    indeed reduces to \eqref{eq:eva-local-problem} and hence, yields
    the exact minimizer $\tilde u_{\spc Y} = u^h_{\spc Y} \in \spc Y$.
    Second, as $\mathcal{O}(\delta^3)$ terms vanish,
    expression~\eqref{eq:local-energy-reduction-predictor-approximation-second-order}
    tells us that, up to a factor $\nicefrac{1}{2}$, the first order
    predictor corresponds to the exact energy decay.
    Hence, in the Dörfler marking strategy~\eqref{eq:dörfler},
    the resulting set of marked edges agree
    for the approximate and exact computations.
\end{remark}

\begin{remark}
    At first glance, our approach might appear reminiscent of the classical
    Zienkiewicz-Zhu (ZZ) error estimator (see, e.g.~\cite[Chapter 4.6.1]{AO00}).
    We emphasize, however, that the ZZ estimator aims at an a posteriori bound for the error,
    whilst our method is based on computing
    local \emph{energy reduction indicators}.
    Furthermore, the ZZ estimator is recovery-based,
    i.e., it estimates the local error
    by suitably reconstructing the gradient of the current approximation,
    which, in contrast to the variational adaptivity concept,
    is computed using local information only;
    indeed, crucially in our framework, a potentially improved
    approximation of the given solution
    is computed using a
    \emph{global mode} and a local
    (low-dimensional) enrichment of the approximation space at
    any edge $E\in\mathcal{E}^I_N$,
    cf.~\eqref{eq:first-order-approximation-locally-improved-solution}.
\end{remark}

\subsection{Fully iterative solver}
\label{sec:solver}

All the machinery needed to derive a fully iterative adaptive solver
for problem~\eqref{eq:full-space-problem} with $\spc V = \Hone$
is now in place and outlined in Algorithm~\ref{alg:full-solver}.
Roughly speaking, our approach consists of two intertwined strategies:
\begin{enumerate}[1.]

    \item%
    On a given finite dimensional subspace
    $\spc P_1(\mathcal{T}_N) \subset \Hone$,
    we perform (nonlinear) CG iterations until one of the stopping
    criteria described in Section~\ref{sec:stopping-criteria} is met,
    thereby yielding a final iterate
    $u^\star_{N} \in \spc P_1(\mathcal{T}_N)$
    (Line~\ref{alg:full-solver:cg} in Algorithm~\ref{alg:full-solver}).

    \item%
    We use the final iterate
    $u^\star_N \in \spc P_1(\mathcal{T}_N)$
    to compute local energy reduction indicators within the framework
    of edge-based variational adaptivity
    as described in Section~\ref{sec:eva}.
    Combined with the Dörfler marking strategy, these indicators
    determine a subset of interior edges to be refined.
    To guarantee that the refinement process
    preserves mesh shape regularity and avoids degenerating elements,
    we employ NVB as outlined in~\cite[Section~5]{fpw11}
    to refine the mesh and to finally construct the subsequent finite
    element space $\spc P_1(\mathcal{T}_{N+1})$
    (Line~\ref{alg:full-solver:eva} in Algorithm~\ref{alg:full-solver}).

\end{enumerate}
After these two steps have been completed,
an initial guess $u^0_{N+1} \in \spc P_1(\mathcal{T}_{N+1})$
is constructed by linearly interpolating the
final iterate $u^\star_N$ onto the refined mesh
(Line~\ref{alg:full-solver:embedding}
in Algorithm~\ref{alg:full-solver}).
Finally, the procedure is restarted
and repeated until a specified maximum
number of degrees of freedom is reached or, alternatively, 
until the relative energy decay of the sequence of the 
final iterates on each space, i.e. $\{\E(u^\star_N)\}_N$,
falls below a predefined threshold.

\begin{algorithm}
    \begin{algorithmic}[1]  
            \STATE%
            Prescribe the Dörfler parameter $\theta \in (0, 1)$,
            a maximum number of degrees of freedom $N_{\text{max}}$,
            choose one of the stopping criteria described in
            Section~\ref{sec:stopping-criteria},
            and fix the control parameter set
            needed for the chosen stopping criterion.
            \STATE%
            Input an initial mesh $\set T_{N}$,
            and an initial guess $u^0_{N} \in \spc P_1(\set T_{N})$.
            \WHILE{$ \text{dim}~\spc P_1(\mathcal{T}_N) \leq N_{\text{max}}$}
                \STATE%
                \label{alg:full-solver:cg}
                Initialize (nonlinear) CG iterations with $u^0_{N}$
                and stop the iteration at $u^\star_{N}$
                as soon as the chosen stopping criterion is met.
                \STATE%
                \label{alg:full-solver:eva}
                Based on $u^\star_{N}$ and the current mesh $\set T_{N}$,
                perform the edge-based variational adaptivity
                procedure as indicated in
                Algorithm~\ref{alg:edge-based-variational-adaptivity}
                to obtain a refined mesh $\set T_{N+1}$ and
                the corresponding enriched
                finite element space $\spc P_1(\mathcal{T}_{N+1})$.
                \STATE%
                \label{alg:full-solver:embedding}
                Define $u^0_{N+1} := u^\star_{N}$,
                set $N \leftarrow N+1$,
                and reset all control parameters from the chosen stopping criterion
                that might have been changed during the CG iterations.
            \ENDWHILE
    \end{algorithmic}
    \caption{Fully iterative edge-based variational adaptivity $\mathbb{P}_1$-FEM}
    \label{alg:full-solver}
\end{algorithm}

\section{Numerical Experiments}
\label{sec:numerical-experiments}
In this Section, we perform a series of numerical tests for
Algorithm~\ref{alg:full-solver}. Namely,
on an open and bounded polygonal domain
$\Omega \subset \R^2$, which will be either
chosen to be the \emph{L-shaped domain}
$\Omega_{\text{L}} := (-1, 1)^2 \setminus ([0,1] \times [-1, 0])$,
or the \emph{square domain} $\Omega_{\text{S}} := (0, 1)^2$, we consider the numerical
solution of the semilinear elliptic diffusion-reaction
boundary value problem (BVP):
\begin{equation}
    \label{eq:non-linear-pde}
    \begin{aligned}
        - \nabla \cdot
        \left(
            \mat A (\mat x) \nabla u(\mat x)
        \right)
        + \phi(u(\mat x))
        & =
        f(\mat x),
        &\mat x \in & \Omega,
        \\
        u(\mat x) &= 0,
        & \mat x \in & \partial \Omega,
    \end{aligned}
\end{equation}
where $\phi \in \mathrm{C}^0(\mathbb{R})$ is non-decreasing,
$f\in \mathrm{L^2}(\Omega)$,
and $\mat A : \Omega \to \spc R^{2\times 2}$
is a piecewise constant symmetric diffusion matrix that
is uniformly positive definite,
i.e. there exists a constant $C>0$ such that
\begin{equation}
    \label{eq:uniform-ellipticity}
    \forall \mat x \in \Omega
    \,:\,
    \inf_{\substack{\mat\xi\in\spc\R^2\\\mat\xi\neq\mat 0}}
    \frac{
        \mat\xi^\intercal\mat A (\mat x)\mat\xi
    }{
        \mat\xi^\intercal\mat\xi
    }
    \ge C.
\end{equation}
Then, the BVP ~\eqref{eq:non-linear-pde} admits a unique solution $u \in \Hone$~\cite[chapter 4]{glowinski84};
in our numerical tests below,
the data will be specified individually for each experiment.

We define the energy functional
$\E : \Hone \to \mathbb{R}$
corresponding to~\eqref{eq:non-linear-pde} as
\begin{equation}
    \label{eq:non-linear-energy-functional-for-pde}
    \E (v) :=
    \frac{1}{2}
    \int_{\Omega} \mat A (\mat x) \nabla v (\mat x) \cdot \nabla v(\mat x) ~\dd \mat x
    +
    \int_{\Omega} 
    \Phi(v(\mat x))\,
    \dd \mat x 
    -
    \int_{\Omega} f(\mat x) v(\mat x) ~\dd \mat x,
\end{equation}
with $\Phi'=\phi$.
Indeed, $\E$ satisfies Assumption~\ref{assumption} and hence,
there exists a unique minimizer $u \in \Hone$ of $\E$ that
is also the weak solution of~\eqref{eq:non-linear-pde},
cf. Section~\ref{sec:introduction}.
Furthermore, the gradient term in the energy induces an energy norm on $\Hone$, viz.
\begin{equation}
    \label{eq:energy-norm-pde}
    \|u\|_a^2
    :=
    \int_\Omega \mat A (\mat x)
    \nabla u (\mat x) \cdot \nabla u (\mat x)
    ~\dd \mat x.
\end{equation}

In order to ensure that the
respective iterative solver employed
is able to sufficiently progress
before convergence towards the (exact) Galerkin solution is assessed,
a small number of iterations, denoted by $n_{\text{min}}$,
is enforced on each subspace
before evaluating the respective stopping criterion.
In particular, when using the energy tail-off stopping
criterion~\eqref{eq:energy-tail-off-stopping-criterion},
the condition is reformulated as
\begin{equation}\label{eq:critmod}
    \E(u^{n}_{N}) - \E(u^{n-1}_{N})
    <
    \alpha_{\E}
    \frac{
        \E(u^{n_{\text{min}}}_{N}) - \E(u^{n}_{N})
        }{
        n - n_{\text{min}}
    },
    \quad
    \text{once } n > n_{\text{min}}.
\end{equation}
Moreover, owing to the adaptive adjustment of the delay parameter $d$, 
see Remark~\ref{remark:stopping-criteria}~(a),
we emphasize that its value may vary during the (nonlinear)
CG iterations on a given mesh, however,
is always reset to $d_{\text{init}}$ after each mesh refinement,
cf. Line~\ref{alg:full-solver:embedding}
in Algorithm~\ref{alg:full-solver}.

In the performance plots, which we display for each numerical test, the stopping criteria are denoted as follows: criterion~\eqref{eq:energy-tail-off-stopping-criterion} is referred to as
\emph{energy tail-off},
criterion~\eqref{eq:energy-arioli-stopping-criterion-abs} as
\emph{relative energy reduction},
and criterion~\eqref{eq:stopping-criterion-default} as
\emph{default}.
In addition, the following control parameters
are available to be chosen in each of the experiments:
\begin{itemize}
    \item%
    The Dörfler parameter $\theta \in (0, 1)$
    in~\eqref{eq:dörfler}
    (set to $\theta = 0.5$ in \emph{all} experiments),

    \item%
    the control parameter $\alpha_{\E}$
    of the energy tail-off stopping
    criterion~\eqref{eq:critmod},

    \item%
    the control parameter $\alpha_{\gamma}$
    of the relative energy decay stopping
    criterion~\eqref{eq:energy-arioli-stopping-criterion-abs},
 
    \item%
    the initial delay $d_{\text{init}} \geq 1$,
    fixed delay increment $\Delta d \geq 1$,
    and control parameter $\tau > 1$
    for the adaptive delay parameter
    corresponding to~\eqref{eq:adaptive-delay}
    and~\eqref{eq:energy-arioli-stopping-criterion-abs},
    
    \item%
    the minimum number of iterations $n_{\text{min}}$
    performed on each mesh before any stopping criterion is evaluated,
    
    \item%
    the batch size $n_{\text{batch}} \geq 1$ specifying the
    number of iterations between consecutive evaluations
    of the respective stopping criterion,
    
    \item%
    and the threshold $g_{\text{tol}}$ of the default stopping
    criterion~\eqref{eq:stopping-criterion-default}
    (necessary only in the nonlinear experiments and set to
    $g_{\text{tol}}=10^{-8}$ in \emph{all} the nonlinear experiments).
\end{itemize}
As a visual reference, in all of our convergence plots,
we will display a dashed line with slope
$-\nicefrac{1}{2}$ (or $-1$)
corresponding to the optimal convergence rate for the energy norm
(or its square)
with respect to the number of degrees of freedom
in $\spc P_1$-FEM for second order elliptic problems in two dimensions.

Any use of an existing implementation in
\verb|scipy|~\cite{scipy}
is based on
version~\verb|1.13.1|,
and all calculations were performed on the HPC cluster UBELIX
(\href{https://www.id.unibe.ch/hpc}{id.unibe.ch/hpc}) at the University of Bern.

The remainder of this Section is divided into two main parts:
in Section~\ref{sec:nonlinear-experiments}, we consider the case
where the reaction $\phi$ is nonlinear, whereas in
Section~\ref{sec:linear-experiments} we focus on the case
where $\phi$ is linear.

\subsection{Nonlinear Experiments}
\label{sec:nonlinear-experiments}
In this Section, we assess Algorithm~\ref{alg:full-solver}
for the numerical minimization of the energy
functional~\eqref{eq:non-linear-energy-functional-for-pde}
in the case where $\phi$ is nonlinear.
As motivated in Section~\ref{sec:extended-applicability},
the iterative solver employed on each subspace is
chosen to be the Polak--Ribi\`ere type
nonlinear CG method implemented in
\verb|scipy.optimize.fmin_cg|.
In all experiments below,
the computational domain is
the L-shaped domain $\Omega_{\text{L}}$.
The specific choices of the control parameters
are presented in the respective experiments below.
We consider four distinct boundary value problems,
where, for each problem, we report results for both
stopping criteria presented in Section~\ref{sec:stopping-criteria}
and, as a reference, for the default
stopping criterion~\eqref{eq:stopping-criterion-default} implemented in
\verb|scipy.optimize.fmin_cg|.

\subsubsection*{Computational Setup}
Recall from Section~\ref{sec:eva-nonlinear}
that we assume $\E$ to be twice Gâteaux differentiable
such that the locally improved solution may be obtained by
solving~\eqref{eq:first-order-approximation-locally-improved-solution}.
Here, this translates into assuming
the nonlinearity $\phi$ to be continuously differentiable,
i.e. $\phi \in \mathrm{C}^1(\mathbb R)$.

Note that for
Experiments~\ref{experiment-nonlinear-1}--\ref{experiment-nonlinear-3},
the exact full-space solution $u \in \Hone$ is unknown.
Consequently, in the plots displaying the energy difference between
the final iterates and the energy of the full-space solution,
the latter is approximated by the energy of a
reference solution $\tilde u \approx u$
computed on a graded mesh towards the origin in the L-shaped domain~$\Omega_{\mathrm{L}}$, where all of our test problems are expected to exhibit a typical elliptic corner singularity; the strategy employed to create the graded mesh and the solver
used to obtain the reference solutions thereon are both outlined
in~\cite{spicher25}\footnote{The authors gratefully acknowledge Florian Spicher for providing the corresponding code.}.
In contrast, Experiment~\ref{experiment-nonlinear-4} is based on a
manufactured solution, and therefore, we are able to compute the
energy-norm error between the exact full-space
solution and the final iterates.

All experiments below were initialized as follows.
First, the mesh of the underlying domain was uniformly refined
until a sufficiently large number of degrees of freedom was obtained.
Second, the nonlinear CG method
\verb|scipy.optimize.fmin_cg|
was applied using the default
stopping criterion~\eqref{eq:stopping-criterion-default}
with its default parameter settings.
Finally, the mesh was refined using edge-based variational adaptivity
as described in Algorithm~\ref{alg:edge-based-variational-adaptivity},
and the subsequent experiment was initialized on this refined mesh,
with the initial guess given by the final iterate, embedded into the new space.
All numerical integrations have been carried out using a
quadrature rule 
with degree of exactness~$6$.

Finally, we point out that, in all experiments below,
we employ the default stopping criterion
with $g_{\text{tol}} = 10^{-8}$, which is three orders of magnitude
smaller than the value suggested by the default settings.
Consequently, the corresponding values in the plots below
may be regarded as good approximations of the discretization
energy (norm) error and serve as an indicator of the
effectiveness of the proposed refinement strategy.

\begin{experiment}
    \label{experiment-nonlinear-1}
    In this experiment, we consider the
    BVP~\eqref{eq:non-linear-pde} with
    the constant diffusion matrix $\mat A := \Id$,
    the nonlinearity $\phi(u) := u^3$ (corresponding to the potential $\Phi(u)=\tfrac14u^4$),
    and the constant right-hand side function $f := 1$.
    The convergence plot in Figure~\ref{fig:nonlinear-1}
    displays results for Algorithm~\ref{alg:full-solver}
    with the following choices of the control parameters
    for the corresponding stopping criteria.
    Relative energy reduction:
    $\alpha_\gamma = 0.1$,
    $n_{\text{min}} = 10$,
    $\tau = 1.01$, $d_{\text{init}}=10$,
    $\Delta d = 5$.
    Energy tail-off:
    $\alpha_{\E} = 0.01$,
    $n_{\text{min}} = 10$,
    $n_{\text{batch}} = 5$.
    Default:
    $n_{\text{min}} = 10$,
    $g_{\text{tol}} = 10^{-8}$.

    In Figure~\ref{fig:nonlinear-1},
    we observe that both the relative energy reduction stopping
    criterion~\eqref{eq:energy-arioli-stopping-criterion-abs}
    and the energy tail-off stopping
    criterion~\eqref{eq:energy-tail-off-stopping-criterion}
    exhibit optimal convergence behavior,
    although the corresponding data points are located
    about half an order of magnitude above those obtained with the
    default stopping
    criterion~\eqref{eq:stopping-criterion-default}.
    Finally, we observe that, compared to the default stopping criterion,
    our new energy-based criteria require considerably fewer
    space enrichment steps.

    \begin{figure}
        \centering
        \includegraphics[width=0.7\textwidth]{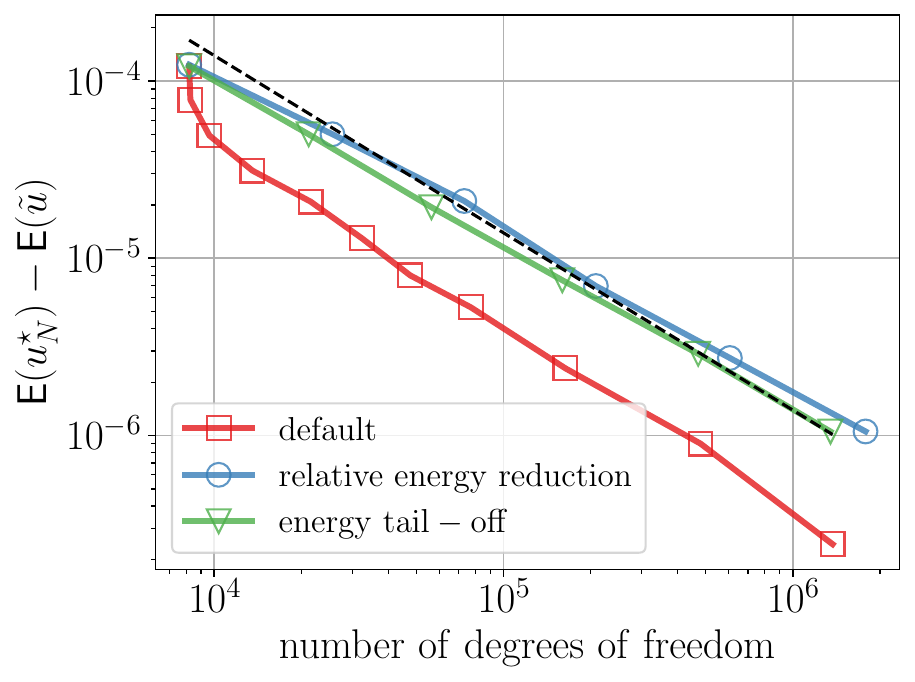} 
        \caption{
            Convergence plot for
            Experiment~\ref{experiment-nonlinear-1},
            obtained with Algorithm~\ref{alg:full-solver}
            and three different stopping criteria.
            The black dashed line represents the optimal
            convergence rate $\mathcal{O}(n_{\text{DOF}}^{-1})$.
        }
        \label{fig:nonlinear-1}
    \end{figure}

\end{experiment}

\begin{experiment}
    \label{experiment-nonlinear-2}
    In this experiment, we consider the
    BVP~\eqref{eq:non-linear-pde} with
    the constant diffusion matrix $\mat A := \Id$,
    the nonlinearity $\phi(u) := |u| u$ (corresponding to $\Phi(u)=\tfrac13|u|^3$),
    and the constant right-hand side function $f := 1$.
    The convergence plot in Figure~\ref{fig:nonlinear-1}
    displays results for Algorithm~\ref{alg:full-solver}
    with the following choices of the control parameters
    for the corresponding stopping criteria.
    Relative energy reduction:
    $\alpha_\gamma = 0.1$,
    $n_{\text{min}} = 10$,
    $\tau = 1.01$, $d_{\text{init}}=10$,
    $\Delta d = 5$.
    Energy tail-off:
    $\alpha_{\E} = 0.01$,
    $n_{\text{min}} = 10$,
    $n_{\text{batch}} = 5$.
    Default:
    $n_{\text{min}} = 10$,
    $g_{\text{tol}} = 10^{-8}$.

    In Figure~\ref{fig:nonlinear-2},
    we observe that the energy tail-off stopping
    criterion~\eqref{eq:energy-tail-off-stopping-criterion}
    performs considerably better than the relative
    energy reduction stopping
    criterion~\eqref{eq:energy-arioli-stopping-criterion-abs}.
    The slope corresponding to the latter even appears to slightly level off asymptotically.
    Nevertheless, the results obtained with both the default stopping criterion
    and the energy tail-off stopping criterion closely follow the dashed line,
    indicating optimal convergence behavior in both cases.
    We therefore conclude that the adaptive edge-based refinement strategy
    is capable of resolving the corner singularity at the origin
    not only when using the final iterate produced by the default stopping criterion,
    but also when using the final iterate obtained with the energy tail-off
    stopping criterion.

    \begin{figure}
        \centering
        \includegraphics[width=0.7\textwidth]{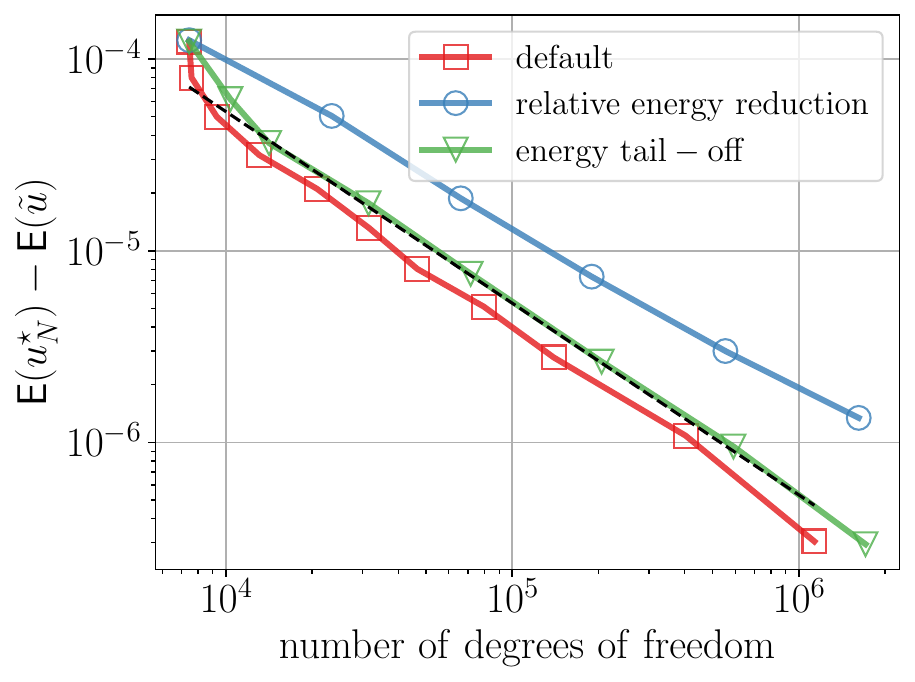} 
        \caption{
            Convergence plot for
            Experiment~\ref{experiment-nonlinear-2},
            obtained with Algorithm~\ref{alg:full-solver}
            and three different stopping criteria.
            The black dashed line represents the optimal
            convergence rate $\mathcal{O}(n_{\text{DOF}}^{-1})$.
        }
        \label{fig:nonlinear-2}
    \end{figure}

    
\end{experiment}

\begin{experiment}
    \label{experiment-nonlinear-3}
    In this experiment, we consider the
    BVP~\eqref{eq:non-linear-pde} with
    the constant diffusion matrix $\mat A := \Id$,
    the nonlinearity $\phi(u) := \exp (u) - 1$,
    and the constant right-hand side function $f := 1$.
    The convergence plot in Figure~\ref{fig:nonlinear-1}
    displays results for Algorithm~\ref{alg:full-solver}
    with the following choices of the control parameters
    for the corresponding stopping criteria.
    Relative energy reduction:
    $\alpha_\gamma = 0.01$,
    $n_{\text{min}} = 10$,
    $\tau = 1.001$, $d_{\text{init}}=50$,
    $\Delta d = 5$.
    Energy tail-off:
    $\alpha_{\E} = 0.01$,
    $n_{\text{min}} = 10$,
    $n_{\text{batch}} = 5$.
    Default:
    $n_{\text{min}} = 10$,
    $g_{\text{tol}} = 10^{-8}$.

    Surprisingly, Figure~\ref{fig:nonlinear-3} shows almost identical
    results as we obtained for Experiment~\ref{experiment-nonlinear-1}.
    Therefore, we again conclude that both
    the relative energy reduction
    criterion~\eqref{eq:energy-arioli-stopping-criterion-abs}
    and energy tail-off stopping
    criterion~\eqref{eq:energy-tail-off-stopping-criterion}
    yield optimal convergence behavior,
    although they are both located about half an order of magnitude
    above the slope corresponding to the default stopping
    criterion~\eqref{eq:stopping-criterion-default}.

    \begin{figure}
        \centering
        \includegraphics[width=0.7\textwidth]{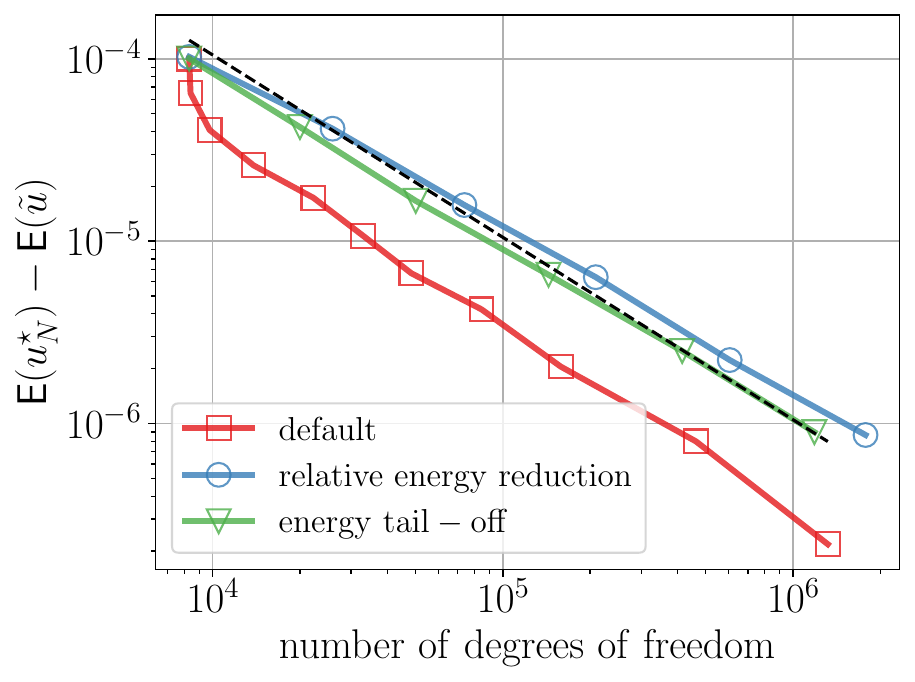} 
        \caption{
            Convergence plot for
            Experiment~\ref{experiment-nonlinear-3},
            obtained with Algorithm~\ref{alg:full-solver}
            and three different stopping criteria.
            The black dashed line represents the optimal
            convergence rate $\mathcal{O}(n_{\text{DOF}}^{-1})$.
        }
        \label{fig:nonlinear-3}
    \end{figure}

\end{experiment}

\begin{experiment}
    \label{experiment-nonlinear-4}
    In this experiment, we consider the
    BVP~\eqref{eq:non-linear-pde} with
    the constant diffusion matrix $\mat A := \Id$,
    and the nonlinearity $\phi(u) := u^3$.
    Further, we choose the right-hand side function $f$
    such that the exact solution is given by the singular function
    \begin{equation}
        u(x, y) := 2 r^{-4/3} xy (1-x^2)(1-y^2),
    \end{equation}
    where $r:= \sqrt{x^2 + y^2}$ is the radial distance to the origin.
    The convergence plot in Figure~\ref{fig:nonlinear-1}
    displays results for Algorithm~\ref{alg:full-solver}
    with the following choices of the control parameters
    for the corresponding stopping criteria.
    Relative energy reduction:
    $\alpha_\gamma = 0.1$,
    $n_{\text{min}} = 10$,
    $\tau = 1.01$, $d_{\text{init}}=10$,
    $\Delta d = 5$.
    Energy tail-off:
    $\alpha_{\E} = 0.01$,
    $n_{\text{min}} = 10$,
    $n_{\text{batch}} = 5$.
    Default:
    $n_{\text{min}} = 10$,
    $g_{\text{tol}} = 10^{-8}$.

    In Figure~\ref{fig:nonlinear-4},
    we clearly observe that the results obtained with the
    energy tail-off stopping
    criterion~\eqref{eq:energy-tail-off-stopping-criterion}
    are almost identical to those obtained with the default stopping
    criterion~\eqref{eq:stopping-criterion-default}.
    In both cases,
    the corresponding slopes closely follow the dashed line,
    indicating optimal convergence behavior.
    Finally, as already observed in
    Experiment~\ref{experiment-nonlinear-2},
    the slope corresponding to the relative energy decay stopping
    criterion~\eqref{eq:energy-arioli-stopping-criterion-abs}
    appears to level off slightly once the number of degrees of freedom
    exceeds a magnitude of~$10^6$.

    \begin{figure}
        \centering
        \includegraphics[width=0.7\textwidth]{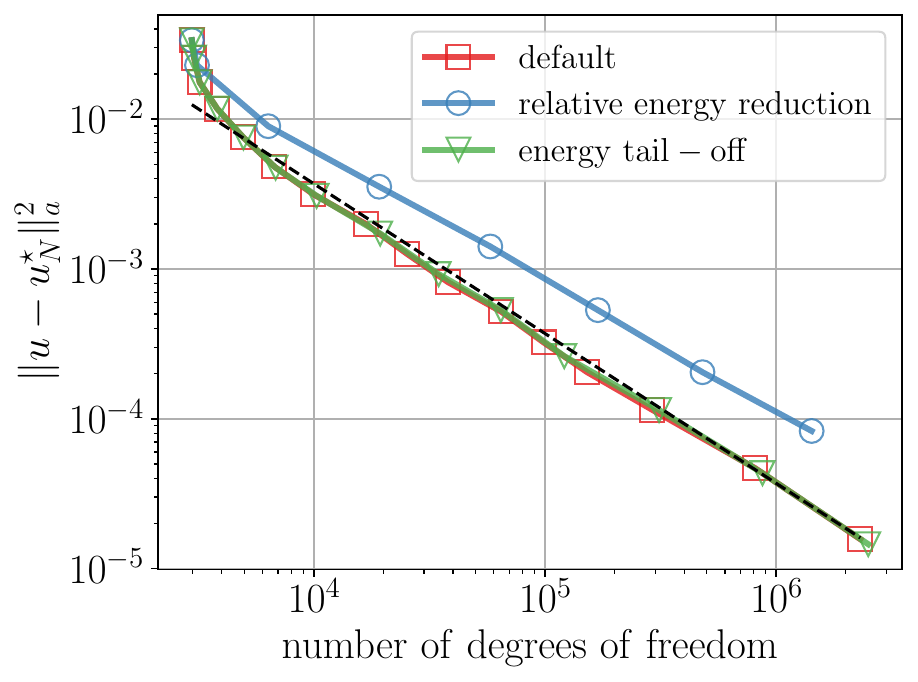} 
        \caption{
            Convergence plot for
            Experiment~\ref{experiment-nonlinear-4},
            obtained with Algorithm~\ref{alg:full-solver}
            and three different stopping criteria.
            The black dashed line represents the optimal
            convergence rate $\mathcal{O}(n_{\text{DOF}}^{-1})$.
        }
        \label{fig:nonlinear-4}
    \end{figure}

\end{experiment}

\subsection{Linear Experiments}
\label{sec:linear-experiments}
In this section, we assess Algorithm~\ref{alg:full-solver}
employed for the numerical solution of~\eqref{eq:non-linear-pde},
where we set
$\phi(u) = c u$, with $c\in \{0, 1\}$ constant.
In this setting,
the energy~\eqref{eq:non-linear-energy-functional-for-pde}
reduces to
\begin{equation}
    \E (v)
    =
    \frac{1}{2}
    \int_\Omega \mat A (\mat x) \nabla v(\mat x) \cdot \nabla v(\mat x)
    ~\dd \mat x
    +
    \frac{c}{2} \int_\Omega v(\mat x)^2~\dd \mat x
    -
    \int_\Omega f(\mat x) v(\mat x)
    ~\dd \mat x,
\end{equation}
possessing the same structure as the energy
in~\eqref{eq:energy-linear-quadratic}.
As motivated in Section~\ref{sec:iterative-solver}, 
the iterative solver, which is employed in this case, is
the standard linear CG algorithm
provided in~\verb|scipy.sparse.linalg.cg|.

We consider four distinct boundary value problems.
For each problem, we present numerical results corresponding
to both stopping criteria introduced in Section~\ref{sec:stopping-criteria}.
The following control parameters are used throughout all experiments:
$\theta = 0.5$,
$\alpha_{\E}=0.1$,
$\tau = 1.01$,
$d_{\text{init}} = 10$,
$\Delta d = 10$,
and $n_{\text{batch}} = 2$.
All additional control parameters, i.e.
$n_{\text{min}}$ and $\alpha_\gamma$,
are specifically selected for each experiment,
and their choice will be detailed in the respective numerical tests below.

In all experiments, the plots should be read as follows.
The line plots with square and circular markers
correspond to the vertical axis on the left,
representing the energy norm of the errors.
In the legend, $u^\star_{N} \in \spc V_{N}$
denotes the final iterate at which the iteration is terminated on the present space, and for which the energy norm of the error is evaluated and plotted.
The line plot with triangular markers corresponds to the vertical
axis on the right and depicts the number of iterations $n(N)$
required on each subspace $\spc V_{N}$ before the corresponding
stopping criterion terminates the iteration,
i.e. we have
$u^{n(N)}_{N} = u^\star_N$.    

\subsubsection*{Computational setup}

As the energy is linear-quadratic in this case,
the local energy decays in
line~\ref{alg:edge-based-variational-adaptivity:energy-diff}
of Algorithm~\ref{alg:edge-based-variational-adaptivity}
are computed with~\eqref{eq:eva-energy-decay-per-edge}.
In the plots corresponding to the experiments below,
we report results for the energy norm of both the
iteration error $\|u^h_N - u_N^\star\|_a$
and the overall error $\|u^\star_N - u\|_a$.
To compute the energy norm of the iteration error,
we compute the Galerkin solution $u^h_N$
on each mesh using a direct solver.
However, to compute the energy norm of the
total error $\|u - u^\star_N\|_a$,
we must resort to an approximate procedure, as for all problems
considered, the exact full-space solution is unknown.
To this end, we exploit~\eqref{eq:galerkin-orthogonality-on-general-subspace},
from which we deduce the useful identity
\begin{equation}
    \|u - u_{N}^\star\|_a^2
    =
    2 \E (u_{N}^\star) + \|u\|_a^2.
\end{equation}
The term $\|u\|_a^2$ is then approximated by a reference value
computed on a strongly refined uniform mesh,
and its numerical value is provided within the description of each respective experiment below.
We note, however, that employing a direct solver to compute the reference value on the final mesh was unfeasible due to memory limitations of the available hardware; therefore, we have resorted to an iterative procedure instead.
More precisely, starting from a coarse initial mesh, we
perform a sequence of uniform mesh refinements;
on each mesh, the CG method is applied with a relative tolerance of
$\epsilon_{\text{rel}} = 10^{-5}$, that is, until it holds the bound
\begin{equation}
    \|\mat b - \mat A \mat x_n \|_2
    \leq
    \epsilon_{\text{rel}} \| \mat b\|_2,
\end{equation}
where $\|\cdot\|_2$ denotes the standard Euclidean norm.
The final approximation on each mesh is recycled as initial guess
on the corresponding refined mesh,
and the final approximation on the finest mesh is used
to compute the reference value.
For Experiments~\ref{experiment-linear-1} and \ref{experiment-linear-2},
this process was continued until the number of
degrees of freedom reached a magnitude of
$\text{dim} \spc V_N \approx 0.30\cdot 10^8$,
while for Experiment~\ref{experiment-linear-3}, it was stopped at
$\text{dim} \spc V_N \approx 0.23\cdot 10^{8}$.

We emphasize that all numerical integrations performed in assembling
the linear systems in our experiments are carried out exactly.
Indeed, for Experiments~\ref{experiment-linear-1}, \ref{experiment-linear-2},
and \ref{experiment-linear-4}, this is straightforward since the
problem data is constant across the entire domain; in Experiment~\ref{experiment-linear-3}, exact integration is ensured
by choosing the initial mesh so that its edges
align with the interface boundaries of each subdomain,
ensuring the data is constant on each element.

\begin{experiment}
    \label{experiment-linear-1}
    In this experiment, we consider the
    BVP~\eqref{eq:non-linear-pde} with
    an anisotropic constant diffusion matrix
    \begin{equation}
        \mat A(\mat x) =
        \begin{pmatrix}
            1 & 0
            \\
            0 & 10^{-2}
        \end{pmatrix},
    \end{equation}
    and a constant reaction constant $c = 1$
    on the square domain $\Omega_{\text{S}}$.
    Due to the small diffusion in the second coordinate direction,  
    we expect boundary layers on the top and bottom boundary of~$\Omega_{\text{S}}$ to form in the true solution.
    To make sure that the convergence is close to optimal
    already for comparatively small numbers of degrees of freedom,
    these boundary layers must be resolved rapidly by the adaptive refinement strategy.
    The energy norm of the true solution 
    is approximated by the reference value
    \begin{equation}
        \|u\|_a^2
        \approx
        \underline{0.071218}57719182778,
    \end{equation}
    where, throughout, the underlined part indicates the stable digits in the numerical approximations.
    From the convergence plots in Figure~\ref{fig:Exp1}, we observe that the overall error
    follows the optimal convergence rate for both stopping criteria proposed in Section~\ref{sec:stopping-criteria}.
    In both cases, the iteration error remains relatively small compared
    to the total error, thus leading to the conclusion that the discretization
    error dominates throughout the experiment.
    Moreover, for either stopping criterion,
    the number of iterations exhibits a single distinct (comparatively high) outlier, which, indicated by the red line with square markers,
    appears to ensure that the CG iteration error
    maintains a certain gap from the total error.
    Finally, we emphasize that, using our approach,
    the number of iterations per mesh was reduced by roughly an order of magnitude
    in comparison with a solution strategy
    based on uniform mesh refinements combined with the default residual-based
    stopping criterion of the CG algorithm
    \verb|scipy.sparse.linalg.cg|;
    this observation holds true in all of our tests, for which
    we computed the reference value of the energy norm of the full space solution
    using the CG algorithm with uniform mesh refinements,
    i.e. for the current experiment, as well as for the subsequent Experiments~\ref{experiment-linear-2} and~\ref{experiment-linear-3}.

    \begin{figure}
        \centering
        \includegraphics[width=0.7\textwidth]{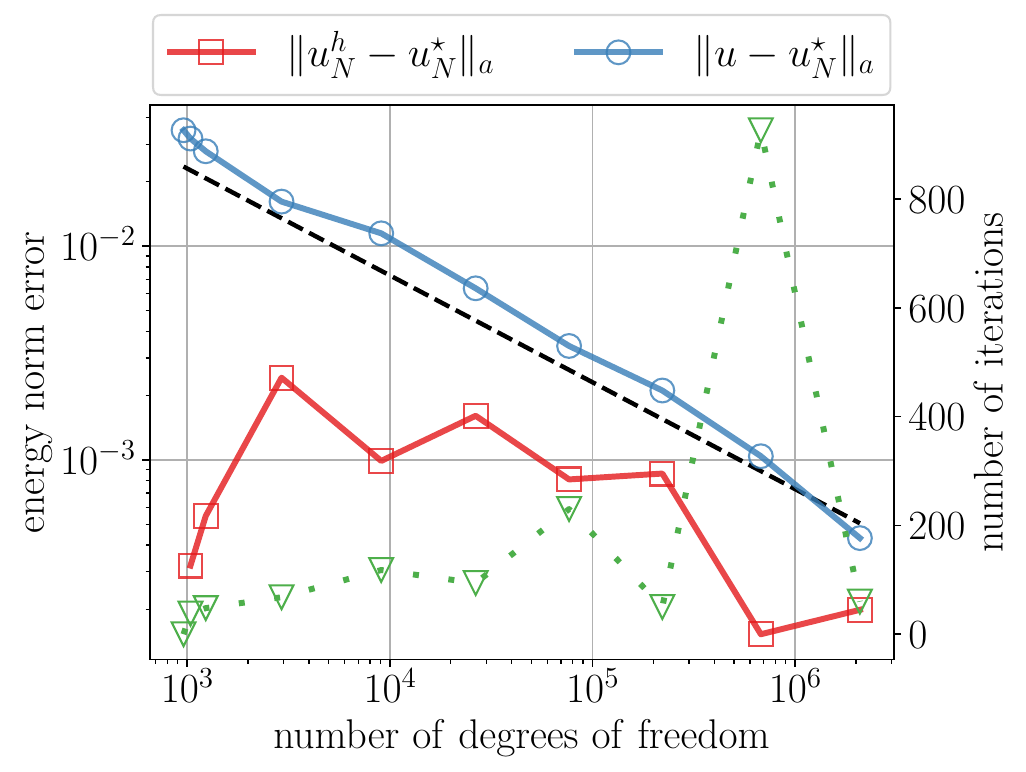} 
        \includegraphics[width=0.7\textwidth]{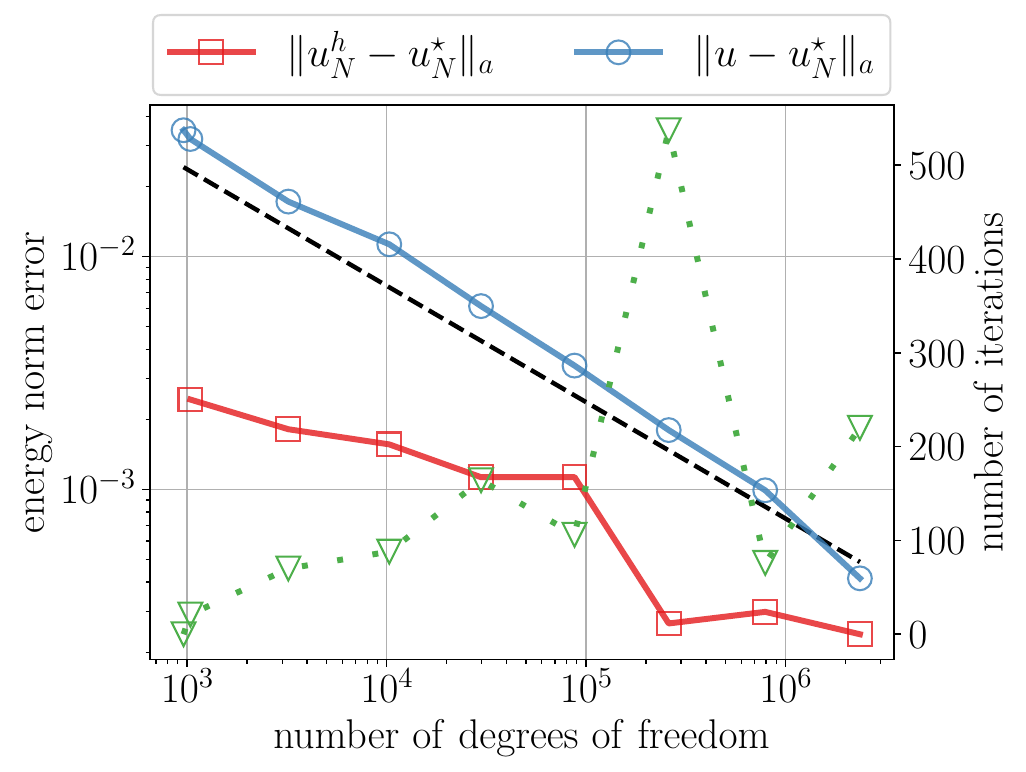} 
        \caption{
            Experiment~\ref{experiment-linear-1}.
            Top: 
            energy tail-off
            stopping
            criterion~\eqref{eq:energy-tail-off-stopping-criterion} 
            with $n_{\text{min}} = 10$.
            Bottom:
            relative energy reduction
            stopping
            criterion~\eqref{eq:energy-arioli-stopping-criterion} 
            with
            $n_{\text{min}} = 10$,
            and $\alpha_\gamma = 0.1$.
        }
        \label{fig:Exp1}
    \end{figure}
\end{experiment}

\begin{experiment}
    \label{experiment-linear-2}
    In this experiment, we consider
    the BVP~\eqref{eq:non-linear-pde}
    with a constant diffusion matrix
    \begin{equation}
        \mat A(\mat x) =
        10^{-2} \mat 1_{2\times 2},
    \end{equation}
    where $\mat 1_{2\times 2}$ denotes the
    $2 \times 2$ identity matrix,
    and a constant reaction coefficient $c = 1$
    on the square domain $\Omega_{\text{S}}$.
    Due to the small (isotropic) diffusion coefficient, we expect boundary layers to form along~$\partial\Omega_{\text{S}}$ in the true solution.
    As previously in Experiment~\ref{experiment-linear-1},
    in order to make sure that the convergence slope attains optimality
    already for comparatively small numbers of degrees of freedom,
    these boundary layers must be resolved by the energy-based adaptive refinement strategy.
    The energy norm of the true solution
    is approximated by the reference value
    \begin{equation}
        \|u\|_a^2
        \approx
        \underline{0.65094}51171871544.
    \end{equation}
        The convergence plots in Figure~\ref{fig:Exp2} indicate that the total error closely follows the optimal convergence behavior.
        For the energy tail-off
        stopping criterion~\eqref{eq:energy-tail-off-stopping-criterion}, we observe that
        the number of linear subspaces considered is visibly higher than in
        Experiment~\ref{experiment-linear-1}, which is, however, naturally accompanied by a
        smaller number of iterations per mesh.
        Moreover, as already observed in Experiment~\ref{experiment-linear-1},
        both stopping criteria display a single distinct outlier in the
        iteration count;
        this behavior appears to reflect the algorithm's tendency to further
        reduce the iteration error at certain points,
        thereby ensuring that it remains sufficiently below the total error.

    \begin{figure}
        \centering
        \includegraphics[width=0.7\textwidth]{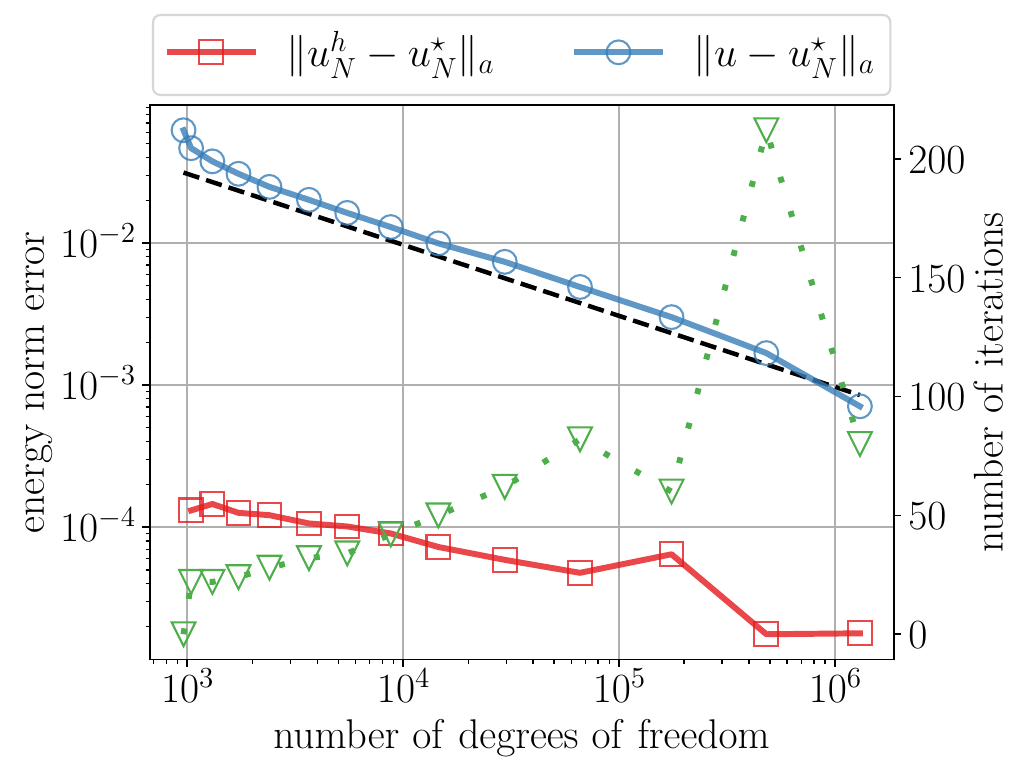} 
        \includegraphics[width=0.7\textwidth]{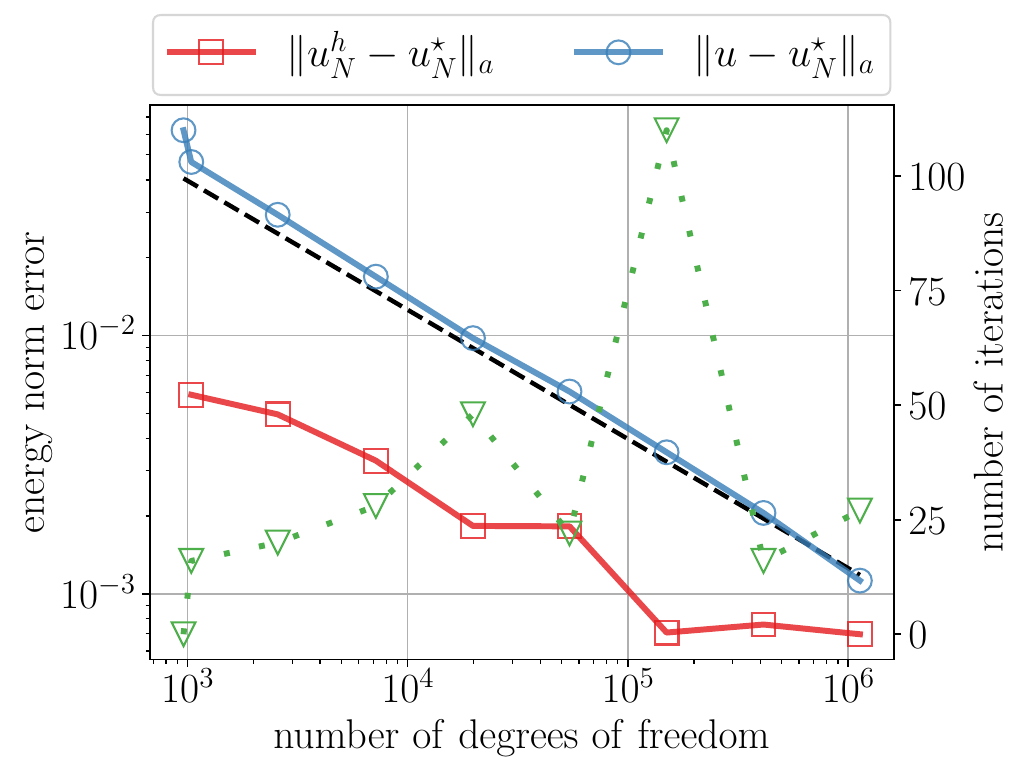} 
        \caption{
            Experiment~\ref{experiment-linear-2}.
            Top:
            energy tail-off
            stopping criterion~\eqref{eq:energy-tail-off-stopping-criterion} 
            with $n_{\text{min}} = 10$.
            Bottom:
            relative energy reduction
            stopping criterion~\eqref{eq:energy-arioli-stopping-criterion} 
            with
            $n_{\text{min}} = 5$,
            and $\alpha_\gamma = 0.1$.
        }
        \label{fig:Exp2}
    \end{figure}
\end{experiment}

\begin{experiment}
    \label{experiment-linear-3}
    Here, we consider the BVP~\eqref{eq:non-linear-pde}
    on the square domain $\Omega_{\text{S}}$, with a constant reaction coefficient $c=1$,
    and a piecewise constant diffusion matrix
    $\mat A(\mat x) = \kappa (\mat x) \mat 1_{2\times 2}$,
    where
    \begin{equation}
        \kappa(\mat x) =
        \begin{cases}
            1, & \mat x\in \Omega_{\text{S}} \setminus
            (\Omega_1 \cup \Omega_2 \cup \Omega_3), \\
            10, & \mat x \in \Omega_1, \\
            0.1, & \mat x \in \Omega_2, \\
            0.05, &  \mat x \in \Omega_3; 
        \end{cases}
    \end{equation}
    the subdomains $\Omega_1, \Omega_2, \Omega_3 \subset \Omega_{\text{S}}$
    are given by
    \begin{align}
        \Omega_1 = (0.1, 0.3) \times (0.1, 0.2), \qquad
        \Omega_2 = (0.4, 0.7) \times (0.1, 0.3), \qquad
        \Omega_3 = (0.8, 1.0) \times (0.7, 1.0).
    \end{align}
    Since the diffusion matrix has piecewise constant entries
    that differ by orders of magnitude when switching between subdomains,
    the interfaces of all subdomains must be adequately resolved
    by the adaptive refinement strategy;
    in our computations, the initial mesh
    has been aligned with the interfaces.
    In Figure~\ref{fig:Exp3-mesh} (left), we observe that our adaptive refinement
    strategy identifies the subdomain corners as
    particularly important regions to be resolved.
    Further, we stress that, due to the strongly varying diffusion 
    coefficient, the resulting system is ill-conditioned.
    Therefore, in this experiment only, we employed the diagonal 
    preconditioner $\mat P = \operatorname{diag}(\mat A)^{-1}$,
    where $\mat A$ denotes the left-hand side matrix
    in~\eqref{eq:linear-algebra-problem}.
    This was realized by passing $\mat P$ as an additional preconditioning
    parameter to \verb|scipy.sparse.linalg.cg|.
    As noted in Remark~\ref{remark:stopping-criteria},
    using any preconditioner does not interfere with the use
    of our custom stopping criteria.
    \begin{figure}
        \centering
        \includegraphics[width=0.49\textwidth]{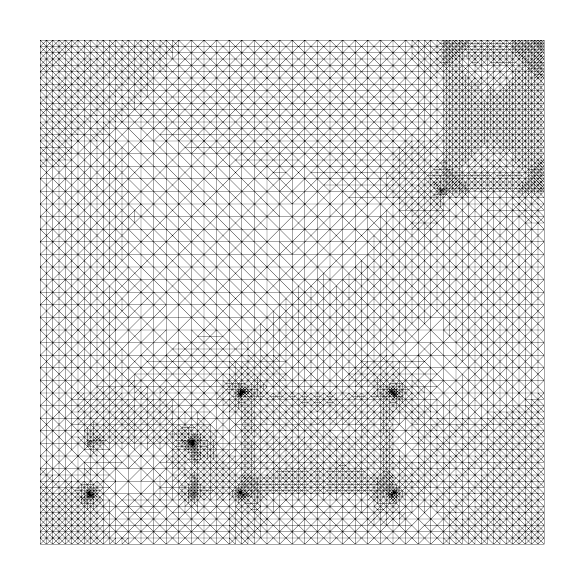}\hfill 
        \includegraphics[width=0.49\textwidth]{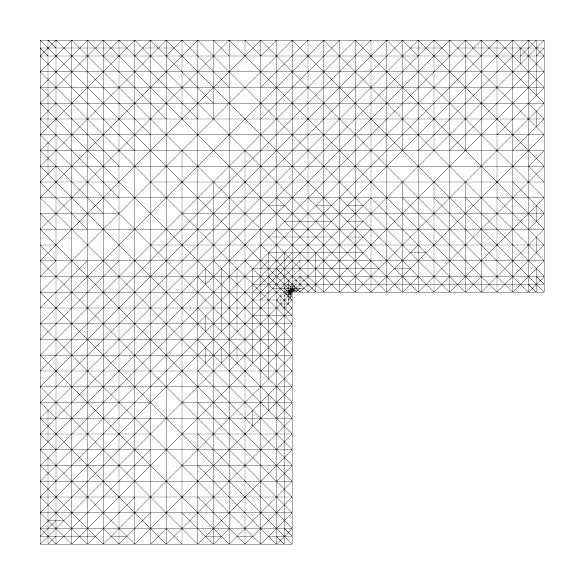}
        \caption{
            Both meshes were obtained
            in conjunction with the energy tail-off
            stopping criterion~\eqref{eq:energy-tail-off-stopping-criterion}
            and the control parameter $n_{\text{min}} = 10$.
            Left: Adaptively refined mesh for Experiment~\ref{experiment-linear-3}
            with $7511$ degrees of freedom.
            Right: Adaptively refined mesh for Experiment~\ref{experiment-linear-4}
            with $1330$ degrees of freedom.
        }
        \label{fig:Exp3-mesh}
    \end{figure}
    The energy norm of the true solution 
    is approximated by the reference value
    \begin{equation}
        \|u\|_a^2
        \approx
        \underline{0.040763}58619422494.
    \end{equation}
        The convergence plots show that the number of linear subspaces
        generated by the algorithm remains comparatively small for both
        stopping criteria.
        At the same time, the number of iterations per mesh stays low,
        except for the final data point in the case of stopping
        criterion~\eqref{eq:energy-tail-off-stopping-criterion},
        where a higher iteration count reduces the iteration error by
        roughly one order of magnitude below the total error.
        In both cases, our energy-based adaptive algorithm is able to attain optimal convergence.

    \begin{figure}
        \centering
        \includegraphics[width=0.7\textwidth]{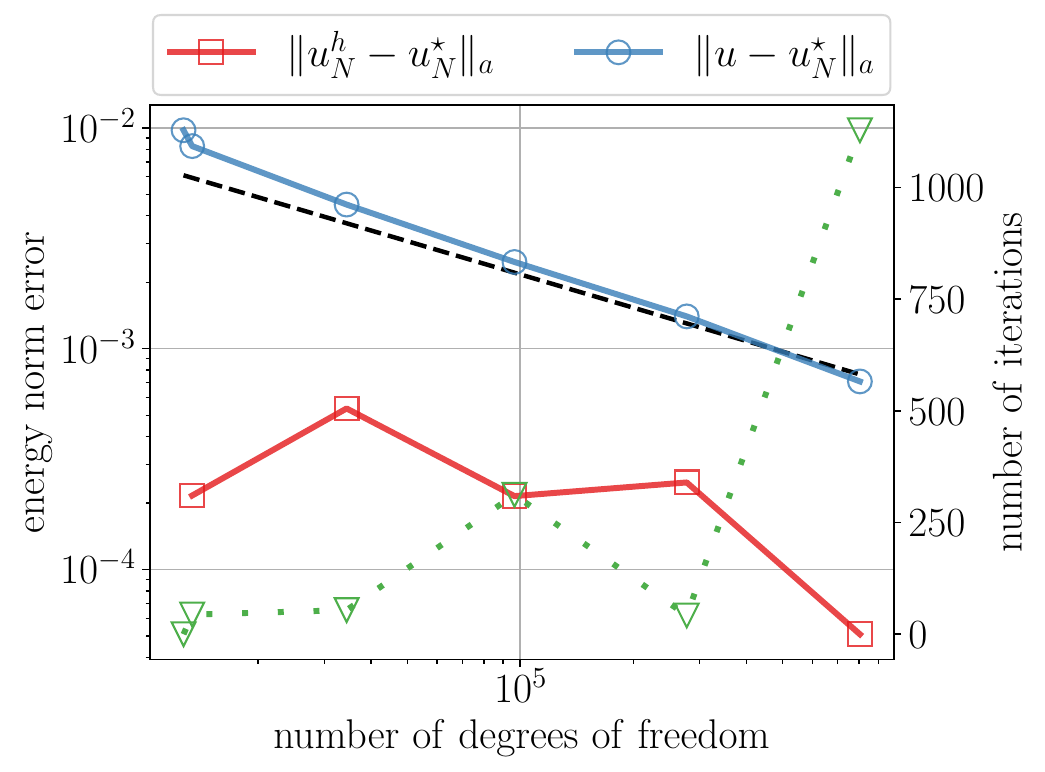} 
        \includegraphics[width=0.7\textwidth]{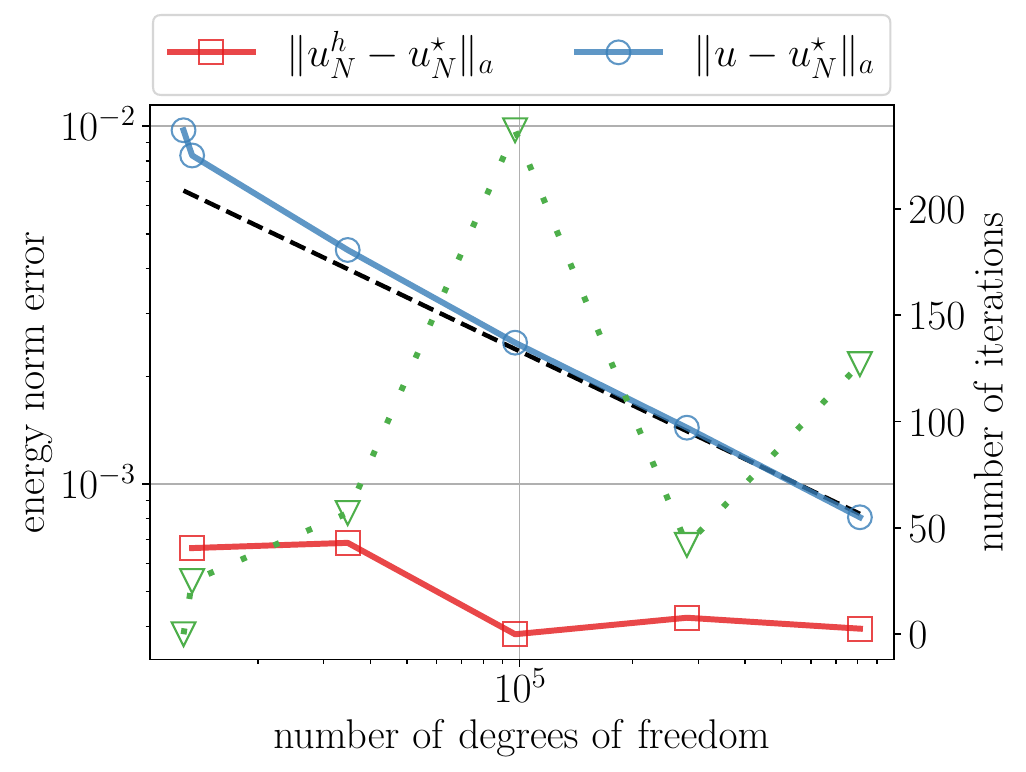} 
        \caption{
            Experiment~\ref{experiment-linear-3}.
            Top:
            energy tail-off
            stopping criterion~\eqref{eq:energy-tail-off-stopping-criterion} 
            with $n_{\text{min}} = 5$.
            Bottom:
            relative energy reduction
            stopping criterion~\eqref{eq:energy-arioli-stopping-criterion} 
            with
            $n_{\text{min}} = 5$,
            and $\alpha_\gamma = 0.1$.
        }
    \end{figure}
\end{experiment}

\begin{experiment}
    \label{experiment-linear-4}
    In this test, we consider the
    BVP~\eqref{eq:non-linear-pde} with a constant diffusion matrix
    $
        \mat A = \mat 1_{2\times 2},
    $
    and a vanishing reaction coefficient $c=0$
    on the L-shaped domain $\Omega_{\text{L}}$.
    This benchmark problem is well-known to exhibit an elliptic corner
    singularity at the origin that mandates appropriate local mesh refinement,
    which is correctly resolved by our adaptive mesh refinement strategy,
    as can be observed in Figure~\ref{fig:Exp3-mesh} (right).
    The energy norm of the true solution 
    is approximated by 
    \begin{equation}
        \|u\|_a^2
        \approx
        \underline{0.21407580}2220546;
    \end{equation}
    this value was obtained\footnote{The authors gratefully acknowledge the help of Dr.~Patrick Bammer for carrying
    out the necessary high-accurate $hp$-FEM computations.} by applying a standard
    $hp$-finite element approach as described in \cite[\S4]{schwab98}.
    Our energy-based adaptive refinement strategy successfully identifies the singular behavior in the vicinity of the origin, see Figure~\ref{fig:Exp4},
    thereby ensuring optimal convergence for both stopping criteria of the CG iteration; incidentally, in either case
    we observe two distinct peaks in the iteration count,
    which appear to enforce sufficient reduction of the iteration error.
    As a result, the iteration error remains roughly two orders of
    magnitude below the total error for the energy tail-off
    stopping criterion~\eqref{eq:energy-tail-off-stopping-criterion},
    and about one order of magnitude below for the relative energy reduction
    stopping criterion~\eqref{eq:energy-arioli-stopping-criterion}.

    \begin{figure}
        \centering
        \includegraphics[width=0.7\textwidth]{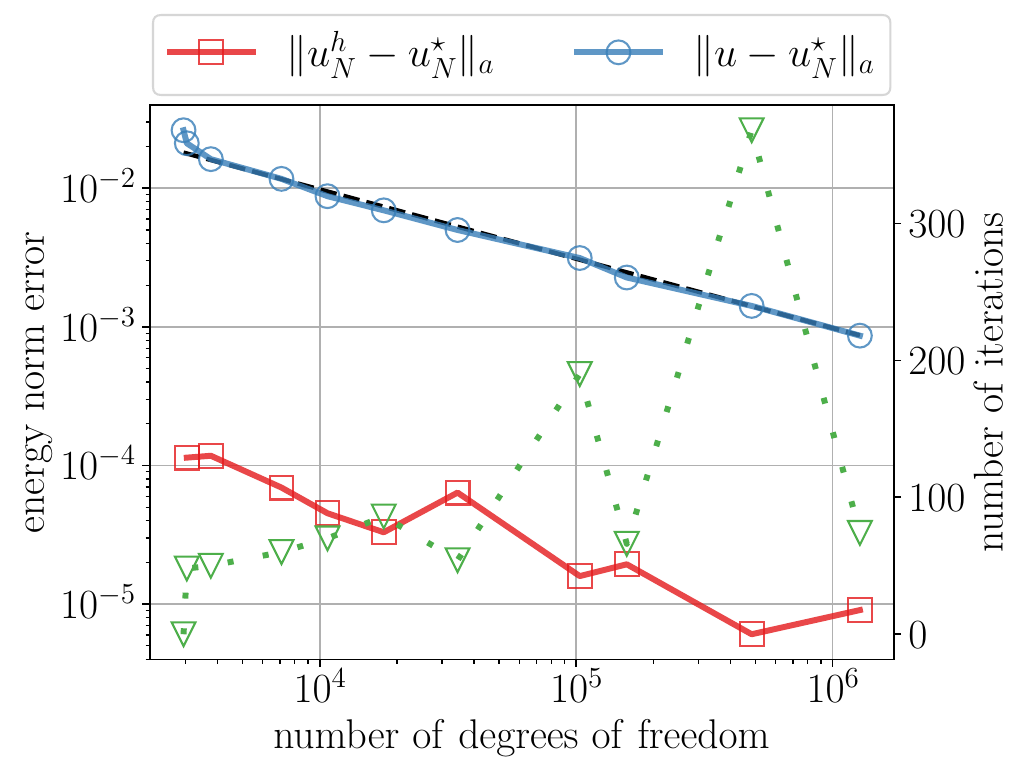}
        \includegraphics[width=0.7\textwidth]{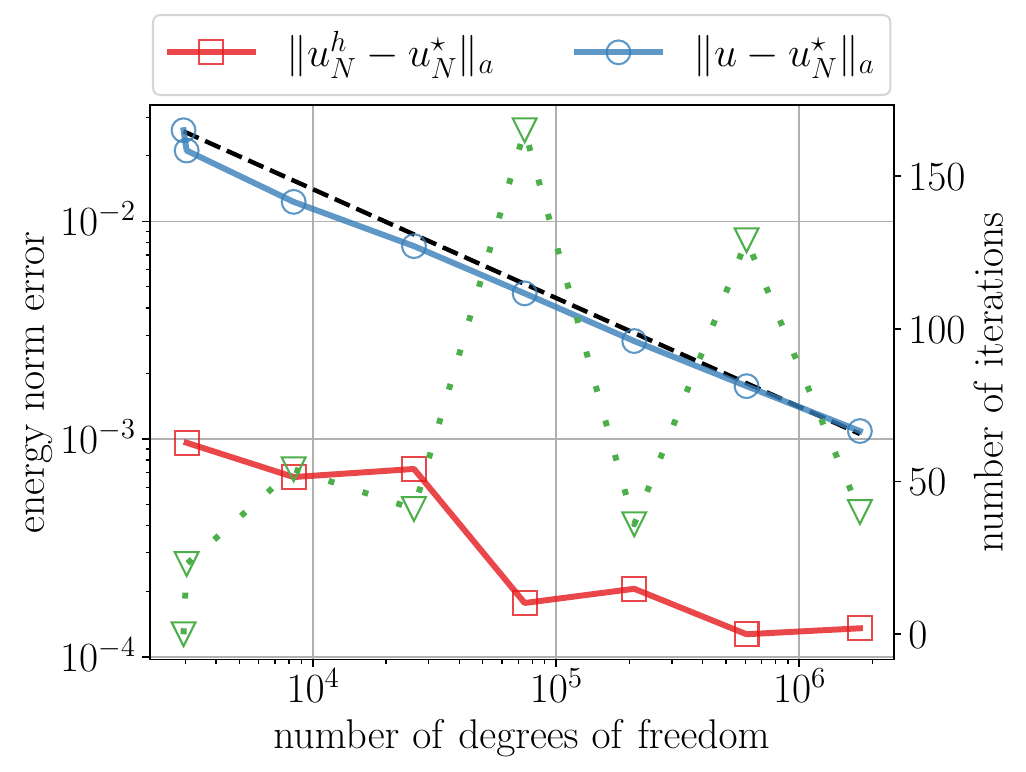}
        \caption{
            Experiment~\ref{experiment-linear-4}.
            Top:
            energy tail-off
            stopping criterion~\eqref{eq:energy-tail-off-stopping-criterion} 
            with $n_{\text{min}} = 10$.
            Bottom:
            relative energy reduction
            stopping criterion~\eqref{eq:energy-arioli-stopping-criterion} 
            with
            $n_{\text{min}} = 10$,
            and $\alpha_\gamma = 0.01$.
        }
        \label{fig:Exp4}
    \end{figure}

\end{experiment}

\section{Conclusions}
\label{sec:conclusions}

We have developed a fully iterative, energy-based adaptive
framework for the numerical minimization of strictly convex and weakly coercive energy
functionals $\E : \spc V \to \mathbb{R}$ on a Hilbert space~$\spc V$.
Roughly speaking, the proposed strategy combines
an energy-based adaptive construction of a
hierarchical sequence of finite-dimensional subspaces
with an iterative energy minimization strategy on each discrete
subspace $\spc V_N\subset\spc V, N\in\mathbb{N}$,
based on a (nonlinear) CG method in conjunction
with energy-based stopping criteria.

Within a more specific framework, we have presented a straightforward
realization of our abstract strategy for $\spc P_1$ finite element discretizations of
the underlying Hilbert space $\spc V = \Hone$,
termed edge-based variational adaptivity.
In this context,
compared to earlier works, our contribution provides essentially two novelties:
Firstly, with respect to adaptive enrichment,
previous implementations of \emph{variational adaptivity}
compute local energy reductions using element-based refinements,
see, e.g.,~\cite{ahw23,HSW21}, whilst in the current work, we exploit the corresponding energy reductions via edge bisection, which are slightly more localized.
Secondly, regarding energy minimization on each finite-dimensional subspace,
our approach intertwines the adaptive space enrichments with a (nonlinear) CG method,
again purely based on energy techniques, to solve the discrete minimization problems, and thereby
inherently avoids the need to solve any large-scale
linear system exactly.
%
Our numerical experiments confirm that,
even in this minimal setting of
\emph{edge-based variational adaptivity}
and without solving any large-scale linear system exactly,
the resulting sequence of final iterates
$\{u_N^\star\}_N \subset \Hone$
exhibits optimal convergence rates.

\clearpage
\printbibliography

\end{document}